     \def\section{\@startsection{section}{1}%
     \z@{.7\linespacing\@plus\linespacing}{.5\linespacing}%
     {\bfseries
     \centering
     }}
     \def\@secnumfont{\bfseries}
\newtheorem{theorem}{Theorem}[section]
\newtheorem{lemma}[theorem]{Lemma}
\newtheorem{proposition}[theorem]{Proposition}
\theoremstyle{definition}
\newtheorem{definition}[theorem]{Definition}
\newtheorem{example}[theorem]{Example}
\theoremstyle{remark}
\newtheorem{remark}[theorem]{Remark}
\theoremstyle{definition}
\newtheorem{assumption}[theorem]{Assumption}
\tikzstyle{morphism}=[fill=white, draw=black, shape=rectangle]
\tikzstyle{medium box}=[fill=white, draw=black, shape=rectangle, minimum width=0.7cm, minimum height=0.7cm]
\tikzstyle{large morphism}=[fill=white, draw=black, shape=rectangle, minimum width=1.7cm, minimum height=1cm]
\tikzstyle{bn}=[fill=black, draw=black, shape=circle, inner sep=1.5pt]
\tikzstyle{state}=[fill=white, draw=black, regular polygon, regular polygon sides=3, minimum width=0.8cm, shape border rotate=180, inner sep=0pt]
\tikzstyle{medium state}=[fill=white, draw=black, regular polygon, regular polygon sides=3, minimum width=1.3cm, inner sep=0pt, shape border rotate=180]
\tikzstyle{large state}=[fill=white, draw=black, regular polygon, regular polygon sides=3, minimum width=2.2cm, shape border rotate=180, inner sep=0pt]
\tikzstyle{wide state}=[fill=white, draw=black, shape=isosceles triangle, minimum width=0.8cm, shape border rotate=270, inner sep=1.4pt, minimum height=0.5cm, isosceles triangle apex angle=80]
\tikzstyle{wn}=[fill=white, draw=black, shape=circle, inner sep=1.5pt]
\tikzstyle{blue morphism}=[fill=white, draw={rgb,255: red,15; green,0; blue,150}, shape=rectangle, text={rgb,255: red,15; green,0; blue,150}, tikzit category=blue]
\tikzstyle{blue state}=[fill=white, draw={rgb,255: red,15; green,0; blue,150}, shape=circle, regular polygon, regular polygon sides=3, minimum width=0.8cm, shape border rotate=180, inner sep=0pt, text={rgb,255: red,15; green,0; blue,150}, tikzit category=blue]
\tikzstyle{blue node}=[fill={rgb,255: red,15; green,0; blue,150}, draw={rgb,255: red,15; green,0; blue,150}, shape=circle, tikzit category=blue, inner sep=1.5pt]
\tikzstyle{blue}=[text={rgb,255: red,15; green,0; blue,150}, tikzit draw={rgb,255: red,191; green,191; blue,191}, tikzit category=blue, tikzit fill=white, inner sep=0mm]
\tikzstyle{blue wide state}=[fill=white, draw={rgb,255: red,15; green,0; blue,150}, text={rgb,255: red,15; green,0; blue,150}, shape=isosceles triangle, minimum width=0.8cm, shape border rotate=270, inner sep=1.4pt, minimum height=0.5cm, isosceles triangle apex angle=80]
\tikzstyle{red node}=[fill={rgb,255: red,150; green,0; blue,2}, draw={rgb,255: red,150; green,0; blue,2}, shape=circle, inner sep=1.5pt]
\tikzstyle{Purple node}=[fill={rgb,255: red,150; green,0; blue,150}, draw={rgb,255: red,150; green,0; blue,150}, shape=circle, inner sep=1.5pt]
\tikzstyle{red}=[text={rgb,255: red,150; green,0; blue,2}, inner sep=0mm, tikzit fill=white, tikzit draw={rgb,255: red,191; green,191; blue,191}]
\tikzstyle{purple}=[text={rgb,255: red,150; green,0; blue,150}, inner sep=0mm, tikzit fill=white, tikzit draw={rgb,255: red,191; green,191; blue,191}]
\tikzstyle{white morphism}=[fill=white, draw=white, shape=rectangle, tikzit draw={rgb,255: red,139; green,139; blue,139}]
\tikzstyle{arrow}=[->]
\tikzstyle{dashed box}=[-, dashed]
\tikzstyle{blue arrow}=[-, draw={rgb,255: red,15; green,0; blue,150}, tikzit category=blue]
\tikzstyle{mapsto}=[{|->}]
\tikzstyle{double wire}=[-, double]
\tikzstyle{curly brace}=[decorate,decoration={brace,amplitude=5pt}]
\newcommand{\N}{\mathbb{N}}
\newcommand{\ph}{\mathord{\rule[-0.05em]{0.6em}{0.05em}}}		
\newcommand{\smalltitle}[1]{%
	\par\bigskip\noindent%
	\paragraph{\textbf{#1}}%
}
\newcommand{\cat}[1]{{\mathsf{#1}}} 
\newcommand{\id}{\mathrm{id}} 		
\newcommand{\R}{\mathbb{R}}
\newcommand{\mydots}{\cdotp\!\cdotp\!\cdotp}
\newcommand{\tailcond}[1]{{#1}_{|\mathrm{tail}}}
\newcommand{\tensor}{\otimes}
\tikzset{pullback/.style={minimum size=1.2ex,path picture={	
			\draw[opacity=1,black,-,#1] (-0.5ex,-0.5ex) -- (0.5ex,-0.5ex) -- (0.5ex,0.5ex);%
}}}
\newcommand{\cC}{\mathsf{C}}		
\renewcommand{\det}{\mathrm{det}}	
\newcommand{\samp}{\mathsf{samp}}	
\newcommand{\FinStoch}{\mathsf{FinStoch}}
\newcommand{\BorelStoch}{\mathsf{BorelStoch}}
\newcommand{\as}[1]{
		\def\relstate{#1}%
		\ifx\relstate\empty
		  \text{a.s.}%
		\else
		  {#1\text{-a.s.}}%
		\fi
	}
\newcommand{\ase}[1]{=_{#1\text{-a.s.}}}					
\DeclareMathOperator{\cop}{copy}
\DeclareMathOperator{\discard}{del}
	\DeclarePairedDelimiterX{\Set}[1]{\{}{\}}{%
		
		#1
	}
		\let\oldSet\Set
		\def\Set{\@ifstar{\oldSet}{\oldSet*}}
	\DeclarePairedDelimiterX{\Family}[1]{(}{)}{%
		
		#1
	}
		\let\oldFamily\Family
		\def\Family{\@ifstar{\oldFamily}{\oldFamily*}}
		\def\@textbottom{\vskip \z@ \@plus 18pt}
		\let\@texttop\relax
\begin{document}

\title[De Finetti's Theorem in Categorical Probability]{De Finetti's Theorem in Categorical Probability}

\author{Tobias Fritz*}
\thanks{* Corresponding author}
\address{Tobias Fritz: Department of Mathematics, University of Innsbruck, Austria}
\email{tobias.fritz@uibk.ac.at}
\urladdr{http://tobiasfritz.science/}

\author{Tom{\'a}{\v{s}} Gonda}
\address{Tom{\'a}{\v{s}} Gonda: Perimeter Institute for Theoretical Physics; School of Physics and Astronomy, University of Waterloo, Waterloo ON, Canada}
\email{tgonda@pitp.ca}
\urladdr{https://perimeterinstitute.ca/people/tomas-gonda}

\author{Paolo Perrone}
\address{Paolo Perrone: Department of Computer Science, University of Oxford, United Kingdom}
\email{paolo.perrone@cs.ox.ac.uk}
\urladdr{http://www.paoloperrone.org}

\subjclass[2010] {Primary 60A05, 60G09; Secondary 18M35, 18M05, 62A01}

\keywords{de Finetti's Theorem, exchangeability, categorical probability, Markov categories}

\begin{abstract}
    We present a novel proof of de Finetti's Theorem characterizing permutation-invariant probability measures of infinite sequences of variables, so-called exchangeable measures.
	The proof is phrased in the language of Markov categories, which provide an abstract categorical framework for probability and information flow. 
	The diagrammatic and abstract nature of the arguments makes the proof intuitive and easy to follow.
	We also show how the usual measure-theoretic version of de Finetti's Theorem for standard Borel spaces is an instance of this result.
\end{abstract}

\maketitle

\section{Introduction}

	De Finetti's Theorem states that every permutation-invariant joint probability distribution of countably many random variables is such that these variables are conditionally independent given a suitable latent variable.
	Moreover, a canonical choice of the latent variable is the empirical distribution of the variables under consideration (which exists almost surely). 
	In this paper, we state and prove a more abstract version of de Finetti's Theorem in the context of \emph{categorical probability theory}, which is a nascent framework for a foundation of probability and statistics that is more abstract and more general than the traditional measure-theoretic approach.

\smalltitle{Context of de Finetti's Theorem.}

	To provide some context for and illustrate the significance of de Finetti's Theorem, it is helpful to consider the notoriously divisive debate on the subjective vs.~objective view on probability \cite{sep-probability-interpret,suppes2002representation}.
	For our purposes, note that there are situations in which it is commonly presumed that one can give objective meaning to probability and infer its value.
	For example, flipping a coin and counting the relative frequency of `heads' among the trials ought to converge to the bias of the coin.
	This is the content of the law of large numbers.
	
	However, there is no free lunch and the assumption that the objective probability \emph{exists} is crucial in the above reasoning.
	As de Finetti eloquently argues in Section 3 of \cite{de1937prevision}, from the subjectivist point of view,
	\begin{quote}
		``nothing obliges us to choose [probability of heads in the next toss] to be close to the frequency [of heads in the previously observed tosses].''
	\end{quote}
	Let us give a brief account of the argument here and refer to de Finetti's classic \cite{de1937prevision} for more details.
	
	In the coin-flipping experiment, suppose that we are given a (subjective) joint probability distribution over possible sequences of heads and tails.
	If the coin had an associated bias that does not change from one trial to the next, we would expect the outcomes of different trials to be conditionally independent, given the knowledge of the bias.
	In such case, however, the bias is an (objective) property of the coin that corresponds to the probabilities that the coin lands heads or tails respectively.
	On the flip side, it is unclear how to justify trial independence without implicitly bringing in the assumption of objective probability. 
	Nevertheless, we may require that the joint distribution is unchanged under permutations of the different trials, a property known as \emph{exchangeability}\footnotemark{} which expresses the belief that these are trials of the ``same phenomenon''.
	\footnotetext{Exchangeable measures are occasionally also called ``symmetric'' or ``equivalent'', see the translator's note of \cite{de1937prevision}.}

	De Finetti shows that with respect to any exchangeable probability measure, the individual trials are conditionally independent given some random variable $Y$.
	Moreover, there is a canonical choice for $Y$ whose values precisely correspond to the possible biases of the coin, so that (in this example) $Y$ takes values in the unit interval $[0,1]$.
	De Finetti's Theorem thus justifies the use of probability distributions in the subjectivist conception, since it says that exchangeable behaviors (of infinitely many trials) are indistinguishable from the behaviors of a repeated flipping of a coin with unknown, but fixed, bias.
	
	Indeed, one can easily see that flipping a coin with an unknown bias, distributed according to some probability measure on $Y$, leads to a distribution of sequences of heads and tails that is exchangeable.
	The non-trivial part of de Finetti's Theorem is in the converse.
	It says that, given a countable number of trials, \emph{every} exchangeable distribution arises in this way.
	
	Historically speaking, the above context is the one in which the topic of our paper---de Finetti's Theorem---originates.
	However, besides its implications for the foundations of the concept of probability, the result also bears significance for the development of nonparametric Bayesian modelling \cite{jordan2010bayesian,sep-statistics}.
	More concretely, many natural stochastic processes---such as drawing with replacement (e.g.\ the Pólya urn model and modifications thereof) or without replacement---give rise to exchangeable distributions.
	By de Finetti's Theorem, they can be equivalently described as drawing a random probability distribution $q$ according to a specified prior \emph{distribution over distributions}, $\mu$ (e.g.\ a Dirichlet process), and subsequently drawing independent samples from $q$.
	In other words, they correspond to a mixture of iid (independent and identically distributed) samplings, which is useful both as an aid for intuitive understanding and for concrete calculations.
	
\smalltitle{Existing proofs and variants of de Finetti's Theorem.}

	The original proofs by de Finetti in the binary case (as in coin-flipping) can be found in \cite{de1929funzione,de1937prevision}, while a more general result is due to Hewitt and Savage \cite{hewitt1955symmetric}, who have shown the analogous statement for exchangeable Radon probability measures on compact Hausdorff spaces.
	We give more details on the measure-theoretic formulation for standard Borel spaces in Section~\ref{sec:classical}.
	Since then, many proofs based on different methods have appeared in various contexts.
	For instance, those of \cite[Theorem 11.10]{kallenberg2002foundations} and \cite[Theorem 3.1]{austin2008exchangeable} use the mean ergodic theorem and conditional expectations; in \cite[Theorem~12.17]{klenke2014probability} and~\cite[Theorem~3.1]{aldous1985exchangeability} one arrives at the result via backwards martingales; while harmonic \cite{ressel1985analytical} and non-standard \cite{alam2020nonstandard} analysis have also been used.
	The latter has recently led to a new generalization of de Finetti's Theorem for exchangeable Radon measures on \emph{any} Hausdorff space~\cite{alam2020generalizing}.
	Functional analysis and moment methods have been utilized in the proof from \cite[Chapter VII]{feller2008introduction}, whose more elementary and explicitly calculational version that applies to binary variables can be found in~\cite{kirsch2019elementary}.
	
	Very recently, Jacobs and Staton \cite{jacobsstaton2020definetti} have presented a category-theoretic approach to de Finetti's Theorem, different from the present one, for the binary case. 
	Therein, the statement of de Finetti's Theorem is encoded in the fact that the unit interval---the space of probability distributions---arises as the categorical limit of a sequence of multisets over $\{0,1\}$ (``urns'') related by morphisms that represent the action of drawing a random element from an urn without replacement. 
	
\smalltitle{Our version of de Finetti's Theorem.}

	In Section~\ref{sec:synthetic}, we present an abstract version of de Finetti's Theorem as a statement about morphisms in categories which admit notions of parallel composition, swapping, copying, and discarding, called \emph{Markov categories} \cite{fritz2019synthetic}.
	The framework of Markov categories is very general, and there are many Markov categories in which de Finetti's Theorem does not hold.
	Correspondingly, there are additional axioms that enter as ingredients in the proof of our abstract version of de Finetti's Theorem.
	Specifically, they correspond to the fact that one should be able to 
	\begin{itemize}
		\item construct conditional probabilities, 
		\item describe distributions on spaces of probability distributions, and
		\item consider countable sequences of trials.
	\end{itemize}
	These three requirements will be stated formally in Assumption~\ref{three_assumptions}, after introducing the basics of the formalism of Markov categories in Section~\ref{sec:Markov_cat}.
	Within a Markov category that satisfies them, one therefore obtains a version of de Finetti's Theorem.
	As we show in Section~\ref{sec:synthetic}, the measure-theoretic version for discrete and continuous random variables (described by standard Borel spaces) arises in this way as well.
	Additionally, we automatically get a characterization of exchangeable Markov kernels, unlike the classical results that focus exclusively on exchangeable measures.
	As far as we know, this result has not appeared in the literature before.
	
	It is likely that other Markov categories also have the required properties, and that instantiating the abstract de Finetti's Theorem in those leads to a context in which the result would be entirely new.
	However, we do not know of any such examples at present and leave the search for these to future investigations.
	
\smalltitle{Synthetic probability theory.}

	The process described above---identifying abstract results that follow from properties expressed within the framework and instantiating them in concrete Markov categories---is an example of the \emph{synthetic} approach to probability theory,
	which is distinguished from the standard \emph{analytic} one in terms of measure theory by encapsulating measure-theoretic statements in suitable higher-level axioms \cite{fritz2019synthetic}.
	It differs from standard approaches by using more formal and abstract reasoning and it only depends on the particular measure-theoretic semantics insofar as the synthetic axioms may or may not be satisfied.
	A number of concepts and theorems of classical probability and statistics have been given a synthetic treatment in recent years \cite{chojacobs2019strings,fritz2019synthetic,fritzrischel2019zeroone,fritz2020representable}.
	We recall the ones relevant for de Finetti's Theorem in Section~\ref{sec:Markov_cat}.
	
	The proof that we present in Section~\ref{sec:proof} is inspired by several of the concrete ones mentioned above, but as far as we know it does not match either of them completely.
	Its abstract nature, which places the focus on essential aspects while allowing us to ignore irrelevant details, arguably makes it easier to follow than any measure-theoretic one, although it is still far from obvious.
	The proof relies on the graphical calculus of string diagrams, which captures several non-trivial properties implicitly and whose connectivity explicitly depicts information flow.
	It is these two features that enable one to readily interpret any stage of the proof with relative ease, once some familiarity with the diagrammatic notation has been obtained.

\smalltitle{Outlook.}

	Given the intuitive nature of our proof, it is natural to hope that even deeper results can be proven along similar lines in a purely synthetic manner, and that one can ultimately aim at proving \emph{new} statements that would be too difficult to obtain in the traditional measure-theoretic formalism due to its higher complexity. 
	With this in mind, it may be worth mentioning a few extensions of de Finetti's Theorem which one can try to aim at next.
	
	Among the most interesting variations on the exchangeability theme is arguably the notion of \emph{partial exchangeability}~\cite{diaconis1980finetti}, where the invariance under finite permutation invariance is relaxed to invariance under certain structure-preserving permutations. 
	For example, the Aldous--Hoover Theorem~\cite{aldous1985exchangeability,austin2008exchangeable} characterizes exchangeable arrays of random variables and is closely related to random graphs.
	There is a similar result for Markov chains~\cite{de1938condition,diaconis1980finettimarkov,camerlenghi2019distribution}. 
	More recently, partial exchangeability has been generalized to hierarchical exchangeability~\cite{austin2014hierarchical,jung2021generalization}. 
	The work of Crane and Towsner provides perhaps the most general currently available results along these lines~\cite{crane2015relatively,crane2018relative}, situated in a model-theoretic framework.

\section{Measure-Theoretic Version of de Finetti's Theorem}
\label{sec:classical}

	Let us turn to a more formal exposition of de Finetti's Theorem in standard measure-theoretic language. 
	Given a measurable space $X$, consider the product $X^\N$ of countably many copies of $X$, equipped with the product $\sigma$-algebra. 
	A bijection $\N\to\N$ which fixes all elements apart from a finite subset is called a \emph{finite permutation}. 
	Given a finite permutation $\sigma \colon \N\to\N$, we denote by $n_\sigma \in \N$ the largest natural number not fixed by $\sigma$.
	
	Consider a probability measure $p$ on $X^\N$. 
	By convention, given a finite collection of measurable subsets $S_1,\dots,S_n\subseteq X$, we write 
	\begin{equation*}		
		p \mleft( S_1 \times \dots \times S_n \mright) 
	\end{equation*}
	as shorthand for the probability of the ``cylinder'' event
	\begin{equation*}
		p \mleft(S_1 \times \dots \times S_n \times X \times X \times \dots  \mright) .
	\end{equation*}
	These probabilities specify the marginal distribution of $p$ on the first $n$ components of $X^\N$. 
	We say that the measure $p$ is \emph{exchangeable} if for every finite permutation $\sigma$ and for every finite sequence of measurable subsets $S_1,\dots,S_{n_\sigma}\subseteq X$, we have
	\begin{equation}
		p \mleft( S_1 \times \dots \times S_{n_\sigma} \mright)  =  p \mleft( S_{\sigma(1)} \times \dots \times S_{\sigma(n_\sigma)} \mright) .
	\end{equation}
	As we saw in the introduction, a somewhat trivial example of exchangeable measures is given by product measures, i.e., ones satisfying
	\begin{equation}
		q \mleft( S_1 \times \dots \times S_n \mright) = q \mleft(S_1 \mright) \times \cdots \times q \mleft(S_n \mright).
	\end{equation}
	Such a product measure $q$ constitutes the law of a sequence of iid random variables.
	
	Having introduced the notion of exchangeability, we now turn to de Finetti's Theorem itself.
	A convenient way to express the statement, which lends itself well to category-theoretical translations, is to use the concept of \emph{measures on a space of measures}, as done by Hewitt and Savage \cite[Section~2]{hewitt1955symmetric} among others. 
	If $X$ is a standard Borel space, we denote by $PX$ the set of probability measures on $X$. 
	The set $PX$ can be equipped with a canonical $\sigma$-algebra, namely the one generated by the functions $\varepsilon_f \colon PX \to \R$ given by
	\begin{equation*}
		p \mapsto \int f(x) \, p ( dx )
	\end{equation*}
	for all bounded measurable functions $f \colon X\to \R$.
	Measures \emph{on} $PX$ (thus elements of $PPX$) can be thought of as \emph{random measures on $X$}, where also the specific form of the distribution is subject to uncertainty.\footnote{Such recursive ways of forming spaces can often be accurately captured by the categorical notion of a monad, and indeed these spaces of measures can be described in terms of a well-known monad, the \emph{Giry monad} \cite{giry}.}
	
	Equivalently, measures on $PX$ describe mixtures of measures---either in the sense of finite convex combinations or integrals. 
	Indeed, de Finetti's Theorem can be summarized as the fact that exchangeable measures are mixtures of product measures. 
	Here is the precise statement, in the version for standard Borel spaces.
	
	\begin{theorem}[de Finetti's Theorem]\label{classicalthm}
		Let $X$ be a standard Borel space. A probability measure $p$ on $X^\N$ is exchangeable if and only if there exists a probability measure $\mu$ on $PX$ such that for every finite collection of measurable subsets $S_1,\dots,S_n\subseteq X$, we have
		\begin{equation}
			p \mleft( S_1 \times \dots \times S_n \mright) = \int_{PX} q(S_1) \times \cdots \times q(S_n) \, \mu(dq) .
		\end{equation}
	\end{theorem}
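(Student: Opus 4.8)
The plan is to prove the two implications separately. The forward direction---that a mixture of product measures is exchangeable---I would dispatch first, as it is elementary: for any finite permutation $\sigma$ and subsets $S_1,\dots,S_{n_\sigma}$, the integrand $q(S_{\sigma(1)})\cdots q(S_{\sigma(n_\sigma)})$ equals $q(S_1)\cdots q(S_{n_\sigma})$ by commutativity of multiplication in $\R$, so integrating against $\mu$ shows $p\mleft(S_{\sigma(1)}\times\dots\times S_{\sigma(n_\sigma)}\mright)=p\mleft(S_1\times\dots\times S_{n_\sigma}\mright)$. All of the content is in the converse.

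For the converse I would construct the mixing measure $\mu$ as the law of a limiting empirical distribution. Writing $X_1,X_2,\dots$ for the coordinate random variables on $X^\N$ with joint law $p$, I would form the random empirical measures
\[
    q_n \;:=\; \frac{1}{n}\sum_{i=1}^{n}\delta_{X_i} \;\in\; PX,
\]
which are genuine $PX$-valued random variables because $X$ standard Borel forces $PX$ to be standard Borel (Polish in the weak topology). The first analytic step is to show that $q_n$ converges $p$-almost surely to a random measure $q_\infty$. The cleanest route is reverse martingale convergence: letting $\mathcal{E}_n$ denote the $\sigma$-algebra of events invariant under permutations of the first $n$ coordinates, these decrease in $n$, and for each bounded measurable $f\colon X\to\R$ the average $\int f\,dq_n = \frac1n\sum_{i\le n} f(X_i)$ is $\mathcal{E}_n$-measurable and forms a reverse martingale, so Lévy's theorem gives almost sure convergence. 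Applying this to a countable separating family of bounded continuous functions $f$ and invoking the Polish structure upgrades convergence to weak convergence $q_n\to q_\infty$; I would then set $\mu:=(q_\infty)_{*}\,p$, the law of $q_\infty$ under $p$.

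The remaining and decisive step is the conditional factorization: conditionally on the exchangeable $\sigma$-algebra $\mathcal{E}:=\bigcap_n \mathcal{E}_n$, the variables $X_i$ should be independent and identically distributed with common law $q_\infty$. Granting this, specializing to indicators $f_i=\mathbf{1}_{S_i}$ and taking expectations yields
\[
    p\mleft(S_1\times\dots\times S_n\mright)
    = \mathbb{E}\Bigl[\textstyle\prod_{i=1}^n q_\infty(S_i)\Bigr]
    = \int_{PX} q(S_1)\cdots q(S_n)\,\mu(dq),
\]
which is exactly the claimed representation. I expect this factorization to be the main obstacle. Proving it requires a symmetrization argument: exchangeability lets one permute coordinates without altering conditional expectations given $\mathcal{E}$, and combined with the identification $\mathbb{E}[f(X_1)\mid\mathcal{E}]=\lim_n \frac1n\sum_{j\le n} f(X_j)=\int f\,dq_\infty$ coming from the reverse martingale limit, one bootstraps from the one-coordinate case to arbitrary finite products. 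The delicate points are verifying that $q_\infty$ is a bona fide regular conditional distribution for each $X_i$ given $\mathcal{E}$---this is precisely where standard Borelness is indispensable---and controlling the error terms that distinguish sampling ``with'' versus ``without repetition'' among the first $n$ indices as $n\to\infty$. Once these are in hand, the rest is routine bookkeeping.
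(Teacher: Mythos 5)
Your outline is correct, but it is not the route the paper takes: what you describe is essentially the classical backwards-martingale proof (the one the paper attributes to Klenke and Aldous in its survey of existing arguments), whereas the paper obtains Theorem~\ref{classicalthm} as a corollary of the abstract Theorem~\ref{syntheticthm} for Markov categories, instantiated in $\BorelStoch$ with $A=I$ and evaluated on cylinder sets using $\samp(S|q)=q(S)$. The two arguments are structurally parallel in places---your exchangeable $\sigma$-algebra $\mathcal{E}$ and its interaction with the tail correspond to the paper's Proposition~\ref{prop:tail_algebra}, and your conditional-iid factorization given $\mathcal{E}$ corresponds to Lemma~\ref{lem:conditionally_iid}---but the mechanisms differ: where you take an almost-sure limit of empirical measures $q_n$ and set $\mu$ to be the law of $q_\infty$, the paper never forms an empirical measure at all; it instead takes a regular conditional $\tailcond{p}$ of the first coordinate given all the others, proves it shift- and permutation-invariant, and obtains $\mu$ by pushing $p$ forward along the deterministic map $(\tailcond{p})^\sharp\colon X^\N\to PX$ supplied by representability. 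Your approach buys a self-contained measure-theoretic proof together with the canonical identification of the directing measure as the law of the limiting empirical distribution; the paper's approach buys generality, since the same diagrammatic argument applies to any Markov category satisfying Assumption~\ref{three_assumptions} and, via the parametric category $\cC_A$, yields the version for exchangeable Markov kernels with nontrivial domain, which your argument would have to redo carrying a parameter throughout. The steps you flag as delicate (existence of regular conditional distributions on standard Borel spaces, the with-versus-without-replacement error terms in the product factorization, and assembling the functional limits $\lim_n\int f\,dq_n$ into a genuine random measure $q_\infty$) are real but standard, so I see no gap---only that these are exactly the measure-theoretic details the paper's synthetic proof is designed to absorb into its axioms.
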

	For example, the iid case amounts to $\mu$ being a delta measure $\delta_r \in PPX$ for some $r \in PX$ satisfying $p = r^\N$.
	On the other hand, if $\mu$ is a random measure supported on the set $\{\delta_x\}_{x \in X}$ of delta measures $\delta_x \in PX$, then we obtain $p$ supported on the diagonal---i.e., the set of constant sequences in $X^\N$.
	In other words, the associated exchangeable random variables are then perfectly correlated.
	
	By convex analysis arguments, it can also be shown that given $p$ as in Theorem~\ref{classicalthm}, the measure $\mu$ is uniquely determined.
	Note that the result due to Hewitt and Savage~\cite{hewitt1955symmetric} is more general than the one above, as it applies to Radon probability measures on arbitrary compact Hausdorff spaces. 
	It includes the uniqueness of $\mu$ as well.

\section{Markov Categories}\label{sec:Markov_cat}
	
	We now take a detour from the discussion of de Finetti's Theorem into the realm of Markov categories.
	All of the concepts defined in this section have been introduced in earlier works on categorical probability~\cite{chojacobs2019strings,fritz2019synthetic,fritzrischel2019zeroone,fritz2020representable}.
	Nevertheless, in the interest of a self-contained presentation, we recall the main points here in a slightly less formal way, referring to the existing literature for full technical detail.
	
	As mentioned in the introduction, a Markov category is a category that comes with notions of parallel composition, swapping, copying, and discarding.
	The idea is that contexts in which one wants to model flow of information will generally satisfy these basic requirements and thus correspond to a Markov category.
	In particular, one would expect that any formalisation of classical probability theory does.
	However, there are also many Markov categories that have nothing to do with probability theory (see \cite{fritz2019synthetic}).
	
	One of the simplest interesting examples to keep in mind is $\FinStoch$, the category of finite sets and stochastic matrices. 
	An object in $\FinStoch$ is a finite set, which can be interpreted as the set of possible values of a random variable.
	A morphism, say $f \colon A \to X$, assigns a probability measure on the finite set $X$ to each element of the finite set $A$.
	It can thus be described as a \emph{stochastic matrix} with entries $f(x|a)$ indexed by $a \in A$ and $x \in X$.
	Parallel composition of two stochastic maps, $f \colon A \to X$ and $g \colon B \to Y$, is just their tensor product $f \otimes g \colon A \otimes B \to X \otimes Y$, where $A \otimes B$ is the cartesian product of the underlying sets and one multiplies the probabilities in the formation of the tensor product.
	On the other hand, the sequential composition of $f \colon A \to X$ and $h\colon X \to Z$ to produce $h \circ f \colon A \to Z$ is given by the usual matrix multiplication.
	In the context of stochastic maps, it is also known as the \emph{Chapman-Kolmogorov equation}.

	In the string diagrammatic notation that we make heavy use of, objects are depicted as `wires', while morphisms are generally drawn as `boxes', with their domain below and their codomain above the box depicted as incident wires.
	Parallel and sequential composition is depicted by
	\begin{equation*}
		\tikzfig{compositions}
	\end{equation*}
	respectively.
	
	We depict morphisms $m \colon I\to X$ from the monoidal unit as
	\begin{equation*}
		\tikzfig{state}
	\end{equation*}
	with the interpretation being that they represent ``random states''. 
	For example, in $\FinStoch$ these are the finitely supported probability measures, i.e.\ probability distributions.
	
	Swapping $X \otimes Y \to Y \otimes X$ is implemented by assigning to each pair $(x,y) \in X \times Y$ the delta distribution $\delta_{(y,x)}$ on the pair $(y,x) \in Y \times X$.
	Diagrammatically, we depict such a swap as
	\begin{equation*}
		\tikzfig{swap}
	\end{equation*}
	Copying and discarding in $\FinStoch$ are likewise as one would expect.
	Specifically, $\cop_X \colon X \to X \otimes X$ is a morphism that assigns, to each element $x$, the delta distribution $\delta_{(x,x)}$ on the pair $(x,x) \in X \otimes X$.
	Discarding, $\discard_X \colon X \to I$, is the stochastic map that corresponds to marginalization over $X$,
	where $I$ denotes a fixed single-element set.
	Thus to each element of $X$, the stochastic map $\discard_X$ assigns the unique probability measure on $I$.
	Diagrammatically, we represent these maps as
	\begin{equation*}
		\tikzfig{comultiplication_counit}
	\end{equation*}
	where we implicitly make use of $X \otimes I \cong X \cong I \otimes X$, so that $I$ need not be drawn explicitly.
	
	Abstracting these properties along with corresponding compatibility requirements leads to the notion of Markov category.
	For a more detailed discussion of the definition and its formal aspects, we refer the reader to \cite{fritz2019synthetic}.
	\begin{definition}[Markov category {\cite[Definition 2.1]{fritz2019synthetic}}]
		\label{markov_cat}
		A \emph{Markov category} $\cC$ is a symmetric monoidal category where the monoidal unit object $I$ is a terminal object,\footnote{This means that for every object $X$ in $\cC$ there is a \emph{unique} morphism $X \to I$.} every object $X \in \cC$ is equipped with distinguished morphisms $\cop_X \colon X \to X \otimes X$ and $\discard_{X} \colon X \to I$ that make $X$ into a commutative comonoid, and such that
		\begin{equation}
			\label{multiplicativity}
			\tikzfig{multiplicativity}
		\end{equation}
		holds for all $X, Y \in \cC$.
	\end{definition}
	
	\begin{example}
		As far as this paper is concerned, the most relevant example is that of $\BorelStoch$, which is the category of standard Borel spaces and measurable Markov kernels. 
		It extends the objects of $\FinStoch$ by including both countably infinite measurable spaces as well as those isomorphic to $[0,1]$ with its Borel $\sigma$-algebra.
		Morphisms coincide with those of $\FinStoch$ on finite sets, but in general they are given by Markov kernels.
		That is, a morphism $f \colon A \to X$ is specified by a family of probability measures $f ( \ph | a ) $ over $X$, one for each $a \in A$, such that $f(S | \ph ) \colon A \to [0,1]$ is a measurable map for every measurable subset $S \subseteq X$.
		Sequential composition of Markov kernels $f \colon A \to X$ and $h\colon X \to Z$ is given by the Chapman-Kolmogorov equation as usual, which in our notation reads
		\begin{equation}
			(h \circ f) ( T | a) = \int_X h( T | x ) \, f( dx | a)
		\end{equation}
		for every $a \in A$ and every measurable $T \subseteq Z$. 
		For more details, see \cite[Section 4]{fritz2019synthetic}.
	\end{example}
	
	\subsection{Conditionals}
	
		In the introduction, we mentioned that one of the ingredients in our synthetic proof of de Finetti's Theorem is the existence of conditional probability distributions.
		Let us make this more precise in the context of Markov categories.
		In $\FinStoch$, given a stochastic matrix $f \colon A \to X \otimes Y$ with entries $f ( x, y | a )$, there is also a stochastic matrix $f_{|X}( y | x, a )$ satisfying
		\begin{equation}\label{eq:finstoch_cond}
			f_{|X}( y | x, a ) = \frac{f ( x, y | a )}{\sum_{y'} f ( x, y' | a )}
		\end{equation}
		whenever the denominator $\sum_{y'} f ( x, y' | a )$ is non-zero, and taking arbitrary values otherwise.
		This gives the corresponding probability of $Y = y$ given that $A$ and $X$ attain values $a$ and $x$, respectively.
		Equation~\eqref{eq:finstoch_cond} can be viewed as a version of Bayes' Theorem.
		One can also characterize $f_{|X}$ implicitly by
		\begin{equation}\label{eq:finstoch_cond2}
			f ( x, y | a ) = f_{|X}( y | x, a ) \, \sum_{y'} f ( x, y' | a )
		\end{equation}
		which, unlike equation \eqref{eq:finstoch_cond}, can be expressed in string diagrams.
		\begin{definition}[conditionals {\cite[Definition 11.5]{fritz2019synthetic}}]
			\label{def:conditional}
			Let $f \colon A \to X \tensor Y$ be a morphism in a Markov category $\cC$.
			A morphism $f_{|X} \colon X \tensor A \to Y$ in $\cC$ is called a \emph{conditional} of $f$ with respect to $X$ if the equation
			\begin{equation}\label{eq:conditional}
				\tikzfig{conditional}
			\end{equation}
			holds.
			We say that $\cC$ \emph{has conditionals} if such a conditional exists for all morphisms $f \colon A \to X \otimes Y$ for any objects $A,X,Y$ in $\cC$.
		\end{definition}
		In $\BorelStoch$, this amounts to the existence of regular conditional probabilities for measurable Markov kernels~\cite[Example~2.4]{fritz2020representable}.
	
		As a special case of conditionals, we obtain a synthetic definition of a Bayesian inverse of $f \colon A \to X$ with respect to a prior $m \colon I \to A$.
		\begin{definition}[Bayesian inverse \cite{chojacobs2019strings}]
			\label{def:Bayesian_inverse}
			Given two morphisms $m \colon I \to A$ and ${f \colon A \to X}$, a \emph{Bayesian inverse} of $f$ with respect to $m$ is a conditional of
			\begin{equation}\label{eq:Bayesian_inverse}
				\tikzfig{fm}
			\end{equation}
			with respect to $X$. Explicitly, it is a morphism $f^\dagger:X\to A$ satisfying 
			\begin{equation}
				\tikzfig{bayesian_inverse}
			\end{equation}
		\end{definition}
		For example in $\FinStoch$, such a Bayesian inverse $f^{\dagger} \colon X \to A$ satisfies
		\begin{equation}
			f^{\dagger} (a | x) \, \sum_{a'} f(x | a') \, m (a') = f (x | a) \, m(a)
		\end{equation}
	
		In general, one should keep in mind that even though we denote a Bayesian inverse of $f$ by $f^{\dagger}$, it does depend non-trivially on the prior $m$.
		Moreover, conditionals and Bayesian inverses are generally not unique when they exist.
		In $\FinStoch$, this is because $f_{|X}( y | x, a )$ is arbitrary whenever $\sum_y f ( x, y | a ) = 0$ in equation \eqref{eq:finstoch_cond2}.
		However, one can show that conditionals (and therefore also Bayesian inverses) \emph{are} unique up to almost sure equality \cite[Proposition~13.6]{fritz2019synthetic}.
		\begin{definition}[a.s.-equality \cite{chojacobs2019strings}]
			\label{def:ase}
			Given $m \colon \Theta \to A$, we say that ${f,g \colon A \to X}$ are \emph{$m$-almost surely equal}, denoted by $f \ase{m} g$, if we have
			\begin{equation}\label{eq:ase}
				\tikzfig{ase}
			\end{equation}
		\end{definition}
		We can interpret equation \eqref{eq:ase} as saying that $f$ and $g$ can only differ for events that are deemed irrelevant by $m$.
		For example, in $\BorelStoch$, $f$ and $g$ are \as{$m$} equal if and only if they are equal with probability 1 for every value of $\Theta$, i.e.\ if and only if
		\begin{equation}\label{eq:ase_Borel}
			\int_{S} f ( T | a ) \, m(da | \theta)  =  \int_{S} g ( T | a ) \, m(da | \theta)
		\end{equation}
		holds for all $\theta \in \Theta$ and for all measurable subsets $S$ of $A$ and $T$ of $X$ respectively.
	
	\subsection{Representability}
	
		Second on our list of proof ingredients is the ability to express a space of distributions on an object in a Markov category $\cC$ as an object in $\cC$ itself.
		For example, given an object $X = \{0,1\}$ in $\BorelStoch$, we would like there be an object $PX$ isomorphic to $[0,1]$ whose elements are themselves probability distributions over $X$.
		Indeed, $\BorelStoch$ allows for such a construction \cite[Example 3.19]{fritz2020representable}.
		However, this is not the case for $\FinStoch$ of course since $PX$ cannot be a finite set.
		
		Before stating the general definition of $PX$ more formally, we need to address the question of how to refer to ``elements'' of an object in a Markov category.
		After all, the objects do not come equipped with any underlying set a priori.
		In $\FinStoch$, we can identify the finite set $X$ with those morphisms $I \to X$ that are delta distributions.
		That is, they are morphisms describing no randomness---the deterministic ones.
		More generally, deterministic morphisms in $\FinStoch$ are the $\{0,1\}$-valued stochastic matrices.
		Intuitively, a deterministic $f$ can be characterized by the fact that applying it to two independent copies of its input is guaranteed to result in the same pair of output values as applying $f$ directly to the input and copying its output.
		\begin{definition}[deterministic morphism {\cite[Definition 10.1]{fritz2019synthetic}}]
			\label{def:deterministic}
			Let ${f \colon A \to X}$ be a morphism in $\cC$.
			We say that $f$ is \emph{deterministic} if it satisfies:
			\begin{equation}\label{eq:deterministic}
				\tikzfig{deterministic}
			\end{equation}
			The subcategory of $\cC$ that consists of its deterministic morphisms is denoted by $\cC_{\det}$.
		\end{definition}
		Indeed, condition \eqref{eq:deterministic} fails for every stochastic matrix that is not $\{0,1\}$-valued.
		Besides deterministic morphisms, we will also make use of the concept of $m$-almost surely deterministic ones.
		\begin{definition}[\as{}-deterministic morphism {\cite[Definition 13.11]{fritz2019synthetic}}]
			\label{def:as_det}
			A morphism $f$ in $\cC$ is \emph{$m$-almost surely deterministic} if it satisfies:
			\begin{equation}\label{eq:as_det}
				\tikzfig{as_det}
			\end{equation}
		\end{definition}	
		
		Given Definition~\ref{def:deterministic}, we thus identify ``elements'' of the hypothetical space of distributions $PX$ with deterministic morphisms $I \to PX$.
		In order for these to faithfully represent probability distributions on $X$ (and nothing else), in the general categorical setting we therefore require there to be a bijection between $\cC(I,X)$ and $\cC_\det(I,PX)$.
		Extending this requirement to morphisms with arbitrary domain $A$, which is expected to hold by the same reasoning, leads to the definition of distribution objects.
		\begin{definition}[representable Markov category {\cite[Definition 3.7]{fritz2020representable}}]
		\label{def:representability}
			Given an object $X$ in a Markov category $\cat{C}$, a \emph{distribution object} for $X$ is an object $PX$ together with natural bijections\footnote{The bijections being \emph{natural} refers to the property that the functions instantiating them for different choices of $A$ make up the components of a \href{https://ncatlab.org/nlab/show/natural+isomorphism}{natural isomorphism}.}%
			\begin{equation}\label{eq:representability}
				\cat{C}(A,X) \cong \cat{C}_\det(A,PX) 
			\end{equation}
			between morphisms into $X$ and deterministic morphisms into $PX$.
			Given any morphism $f \colon A\to X$, we denote by $f^\sharp \colon A\to PX$ its counterpart under this bijective correspondence. 	 
			We say that $\cat{C}$ is \emph{representable} if every object of $\cat{C}$ has a distribution object.
		\end{definition}
		
		It turns out that a slightly stronger version of representability is needed in the synthetic proof of de Finetti's Theorem.
		In particular, we require that the identification from \eqref{eq:representability} is compatible with \as{}-equality in the following sense.
		\begin{definition}[\as{}-compatible representability {\cite[Definition 3.18]{fritz2020representable}}]
			\label{def:as_comp_rep}
			A representable Markov category $\cat{C}$ is called \emph{\as{}-compatibly representable} if we have
			\begin{equation}\label{eq:as_comp_rep}
				\tikzfig{as_comp_rep}
			\end{equation}
			for all $m \colon \Theta \to A$ and all $f,g$ as indicated.
		\end{definition}
		For example, $\BorelStoch$ is \as{}-compatibly representable~\cite[Example~3.19]{fritz2020representable}.
	
	    If we set $A = PX$ in bijection \eqref{eq:representability}, we get a correspondence between deterministic morphisms $PX\to PX$ and generic morphisms $PX\to X$. 
	    The identity on $PX$ corresponds to a map $PX \to X$ which we denote by $\samp$, and which we can think of as taking a probability distribution $p$ on $X$ and returning a random element of $X$ distributed according to $p$ (hence, ``sampling'' from $p$). 
	    In $\cat{BorelStoch}$, for instance, for every $p\in PX$ and every measurable $S \subseteq X$ we have  
	    \begin{equation}
			\samp(S|p) = p(S) .
		\end{equation}
		By naturality of the bijection \eqref{eq:representability}, for each morphism $f \colon A\to X$ we have that $f = \samp \circ f^\sharp$ holds. 
		For more details on this, see \cite[Section 3]{fritz2020representable}.
	
	\subsection{Infinite Products}
	
		The third and last ingredient that we need in the proof of our synthetic de Finetti's Theorem is the ability to construct countable products of objects.
		Once again we would not expect $\FinStoch$ to allow those, since its objects are merely finite sets, but $\BorelStoch$ does \cite[Example 3.6]{fritzrischel2019zeroone}.
		The relevant definition is that of \emph{Kolmogorov products} introduced in \cite{fritzrischel2019zeroone} in the context of $0/1$-laws for Markov categories.
	
		Given a hypothetical object $X^{\N}$ describing the product of countably many copies of $X$, every probability measure $f \colon I \to X^{\N}$ should give rise to a corresponding measure $f_{F} \colon I \to X^F$ for each finite subset $F \subseteq \N$.
		Intuitively, $f_F$ is given by marginalizing over those copies of $X$ indexed by the set complement of $F$.
		Therefore, we might expect there to be a corresponding deterministic morphism $\pi_F \colon X^{\N} \to X^F$, which gives $f_F$ when composed with $f$.
		Conversely, the Kolmogorov extension theorem suggests that one can reconstruct $f$ uniquely from its family of finite marginals $(f_F)$, provided that these are suitably compatible.
		That is to say, if $F' \subseteq F$ is a further subset, then one can obtain $f_{F'}$ from $f_F$ by marginalizing over the extraneous copies of $X$. 
		Under this condition, there should be a unique $f$ such that $f_F = \pi_F \circ f$ for all $F$.
		
		The idea underlying Kolmogorov products is to turn these properties that one would expect from a countable product into a definition of a meaningful product of infinitely many objects in any Markov category.
		However, besides merely morphisms of type $I \to X^\N$, we also require similar properties of those with a non-trivial domain $A$ and an additional codomain $Y$ that could be correlated with $X^\N$.
		\begin{definition}[Kolmogorov powers {\cite[Definition 4.1]{fritzrischel2019zeroone}}]
			\label{def:inf_product}
			Given an object $X$ in a Markov category $\cat{C}$, consider an object $X^\N$ of $\cC$ such that there is a natural bijection between 
			\begin{itemize}
				\item morphisms $f$ in $\cC( A, X^\N \otimes Y)$ and
				\item families $(f_F \colon A \to X^F \otimes Y)$ of morphisms indexed by finite subsets $F$ of $\N$ that are compatible in the sense that whenever $F'$ is a subset of $F$, we have
				\begin{equation}
					\tikzfig{compatibility_of_marginals}
				\end{equation}
				where $\pi_{F,F'} \colon X^F \to X^{F'}$ acts as the identity on $X^{F'}$ and applies $\discard_X$ to each of the remaining factors,
			\end{itemize}
			for all objects $A$ and $Y$ of $\cC$.
			If every morphism in the family $(f_F)$ corresponding to the case of $Y = I$, $A = X^\N$, and $f = \id_{X^\N}$ is deterministic, then $X^\N$ is termed a \emph{countable Kolmogorov power} of $X$.
		\end{definition}
		For example, Kolmogorov's extension theorem (in the countable case) states that countable Kolmogorov powers exist in $\BorelStoch$~\cite[Example~3.6]{fritzrischel2019zeroone}.

		Note that the family of morphisms that corresponds to $\id \colon X^\N \to X^\N$ under the prescribed bijection is given by the ``infinite marginalization maps'' $\pi_F \colon X^\N \to X^F$ mentioned before, which play the role of product projections.

		If the Kolmogorov powers $X^\N$ and $Y^\N$ exist for two objects $X$ and $Y$, then for any morphism $f \colon X\to Y$ we get a canonical morphism $X^\N \to Y^\N$ as follows.
		First, for each finite subset $F \subset \N$ we form the map $f^F \colon X^F\to Y^F$ as usual. 
		The family of compositions
		\begin{equation*}
			\begin{tikzcd}
			 	X^\N \ar{r}{\pi_F} & X^F \ar{r}{f^F} & Y^F
			\end{tikzcd}
		\end{equation*}
		for each $F$, by the bijection of Definition~\ref{def:inf_product}, corresponds to a unique morphism $X^\N\to Y^\N$.
		We denote this morphism by $f^\N$.
		
		Although the original definition in \cite{fritzrischel2019zeroone} defines Kolmogorov products of any arbitrarily large family of objects, in the present context we limit ourselves to countable powers of one and the same object, since this is all we need for de Finetti's Theorem.
		
			In the following sections, we use double wires in order to distinguish an object that is a countable Kolmogorov power (such as $X^\N$) from a generic object of a Markov category (such as $X$) depicted by a single wire.
			See equation \eqref{eq:exchangeability} for example.
			At times, we also use three dots to indicate a countable Kolmogorov power, such as in equation \eqref{eq:de_finetti_claim}.
			The latter notation is motivated by the fact that a countable Kolmogorov power $f^\N$ of a morphism $f$ can be informally viewed as a countable number of parallel morphisms, each given by $f$.

\section{De Finetti's Theorem for Markov Categories}
\label{sec:synthetic}
	
	Suppose that a Markov category $\cC$ has countable Kolmogorov powers.
	We can then express what it means for a morphism with a countable power $X^\N$ as codomain to be exchangeable.
	Note that for any injective function $i \colon \N \to \N$ (and in particular for any permutation) we can define a morphism $X^i \colon X^\N \to X^\N$ that maps the $n$-th component in $X^\N$ to 
	\begin{itemize}
		\item the $i^{-1}(n)$-th component of $X^\N$ whenever $n$ is in the image of $i$, and
		\item discards it otherwise.
	\end{itemize}
	The construction of $X^i$ is by the definition of $X^\N$ as the Kolmogorov power of $X$.
	That is, $X^i$ corresponds to the family of morphisms $(X^i_F \colon X^\N \to X^F)$ as in Definition~\ref{def:inf_product}, where $X^i_F$ is the composite of the product projection ${\pi_{i(F)} \colon X^\N \to X^{i(F)}}$ with the canonical isomorphism $X^{i(F)} \cong X^F$ resulting from identifying the factors as prescribed by $i$.

	\begin{definition}[exchangeability]
		\label{def:exchangeability}
		A morphism $p \colon A \to X^\N$ is said to be \emph{exchangeable} if it is invariant under finite permutations of the components of the Kolmogorov power $X^\N$.
		That is, we demand that for every finite permutation $\sigma \colon \N \to \N$, we have
		\begin{equation}\label{eq:exchangeability}
			\tikzfig{exchangeability}
		\end{equation}
	\end{definition}
	
	\begin{assumption}
		\label{three_assumptions}
		Unless stated otherwise, throughout the rest of this paper we assume that $\cC$ is a Markov category that:
		\begin{enumerate}
			\item has conditionals, 
			\item is \as{}-compatibly representable, and
			\item has countable Kolmogorov powers. 
		\end{enumerate}
	\end{assumption}
	
	Note that Assumption \ref{three_assumptions} implies, in particular, that for every object $X$, the space $(PX)^\N$ exists, and is equipped with a copy map $(PX)^\N\to(PX)^\N\otimes (PX)^\N$.
	
	\begin{example}
		$\BorelStoch$ satisfies these assumptions.
		The relevant arguments can be found in \cite[Theorem~1.25]{kallenberg2017random}, \cite[Example 3.19]{fritz2020representable}, and \cite[Example 3.6]{fritzrischel2019zeroone} respectively.
	\end{example}
	
	\begin{theorem}[synthetic de Finetti's Theorem]\label{syntheticthm}
		Let $\cC$ be a Markov category satisfying Assumption~\ref{three_assumptions}.
		Then a morphism $p \colon A \to X^\N$ in $\cC$ is exchangeable if and only if there is a morphism $\mu \colon A \to PX$ such that we have
		\begin{equation}\label{eq:de_finetti_claim}
			\tikzfig{de_finetti_claim}
		\end{equation}
	\end{theorem}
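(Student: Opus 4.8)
The plan is to read equation~\eqref{eq:de_finetti_claim} as the factorization $p = \samp^\N \circ \Delta \circ \mu$, where $\Delta \colon PX \to (PX)^\N$ is the infinite-copy (diagonal) morphism that places the \emph{same} random distribution on every coordinate, and $\samp^\N \colon (PX)^\N \to X^\N$ samples each coordinate independently; both $(PX)^\N$ and $\Delta$ exist by Assumption~\ref{three_assumptions} together with the universal property of Kolmogorov powers (Definition~\ref{def:inf_product}). I would treat the two implications separately, the forward one being routine and the converse carrying essentially the whole weight of the theorem.

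For the \emph{if} direction I would argue purely by naturality. Functoriality of the power $(\ph)^\N$ applied to $\samp$ and to the reindexing morphisms gives $X^\sigma \circ \samp^\N = \samp^\N \circ (PX)^\sigma$ for every finite permutation $\sigma$, while the diagonal is permutation-invariant, $(PX)^\sigma \circ \Delta = \Delta$, since all of its coordinates carry the same value. Composing, $X^\sigma \circ p = \samp^\N \circ (PX)^\sigma \circ \Delta \circ \mu = \samp^\N \circ \Delta \circ \mu = p$, which is precisely exchangeability in the sense of Definition~\ref{def:exchangeability}. This reproduces synthetically the elementary fact that an iid mixture is exchangeable.

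For the \emph{only if} direction I would construct the directing measure from an exchangeable $p$. Using $\N \cong \{2,3,\dots\}$ to split $X^\N \cong X \otimes X^\N$, and exploiting that $\cC$ has conditionals, I would form the conditional $\tailcond{p} \colon X^\N \otimes A \to X$ of the first coordinate given the tail and then pass through the bijection~\eqref{eq:representability} of the a.s.-compatibly representable structure to sharpen it to a $PX$-valued morphism, the synthetic ``conditional law of one coordinate given the others''. Exchangeability forces this morphism to be insensitive to which coordinate is singled out, and the key claim is that it is almost surely deterministic as a function of the tail in the sense of Definition~\ref{def:as_det}: the same random distribution is produced no matter how much of the sequence one discards. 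Granting this, the resulting deterministic tail statistic defines a morphism $X^\N \to PX$ whose composite with $p$ I take to be $\mu \colon A \to PX$. Conditional independence of the coordinates given $\mu$ then follows because each coordinate depends on the tail only through $\mu$, so that the finite marginals of $\samp^\N \circ \Delta \circ \mu$ match those of $p$ coordinate by coordinate; by the universal property of the Kolmogorov power (Definition~\ref{def:inf_product}), agreement on all finite marginals upgrades to the equality~\eqref{eq:de_finetti_claim}, with the various uniqueness-up-to-almost-sure-equality caveats handled through a.s.-compatibility of the representing structure (Definition~\ref{def:as_comp_rep}).

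I expect the main obstacle to be exactly the almost-sure determinism of the conditional law given the tail---the categorical incarnation of the Hewitt--Savage/Kolmogorov zero--one law and of the backward-martingale convergence used in analytic proofs. The engine driving it should be the self-similarity of exchangeable sequences: the tail $(X_2, X_3, \dots)$ is again exchangeable with the same directing measure, so the conditional law satisfies a self-consistency equation that, combined with the structure of Kolmogorov powers from~\cite{fritzrischel2019zeroone} and the uniqueness of conditionals up to almost-sure equality, should pin it down as deterministic. Carrying this out diagrammatically---manipulating conditionals across the isomorphism $X^\N \cong X \otimes X^\N$ while tracking equalities only up to the relevant state---is where the genuine care is required; the remaining bookkeeping of finite marginals is comparatively routine.
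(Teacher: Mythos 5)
Your skeleton matches the paper's proof quite closely: the two directions are split the same way, the directing morphism is obtained by composing $p$ with the representability-sharpening $\tailcond{p}^\sharp \colon X^\N \to PX$ of the conditional of one coordinate given all the others, conditional iid-ness is checked on finite marginals and upgraded through the universal property of the Kolmogorov power, and the passage from $A = I$ to general $A$ is a parametrization issue (the paper handles it by working in the parametric Markov category $\cC_A$ rather than carrying $A$ along by hand). The easy direction via functoriality of $(\ph)^\N$ and permutation invariance of the infinite diagonal is fine.

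The genuine gap sits exactly in the step you yourself flag as the main obstacle. First, a conceptual slip: what must be proved is not that the conditional law is almost surely \emph{deterministic} --- $\tailcond{p}^\sharp$ is deterministic automatically, because every morphism $f^\sharp$ produced by the bijection \eqref{eq:representability} is deterministic by definition. What must be proved is that $\tailcond{p}^\sharp$ is \emph{shift invariant} up to $p$-a.s.\ equality, $\tailcond{p}^\sharp \ase{p} \tailcond{p}^\sharp \circ X^s$, so that it genuinely depends only on the tail and the induction over coordinates closes. Your proposed engine --- ``self-similarity of the tail'' and an unspecified ``self-consistency equation'' --- does not supply this. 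Exchangeability together with a.s.-compatible representability gives, essentially for free, that $\tailcond{p}^\sharp$ is $p$-a.s.\ invariant under every finite permutation $X^\sigma$; the hard content is upgrading finite permutation invariance to shift invariance, i.e.\ the synthetic coincidence of the exchangeable and tail $\sigma$-algebras (Proposition~\ref{prop:tail_algebra}). The paper's mechanism for this is specific and appears nowhere in your sketch: spreadability (Lemma~\ref{lem:spreadability}) shows that the joint morphism $\bigl(\tailcond{p}^\sharp \otimes \id\bigr)\circ\cop_{X^\N}\circ\, p$ is unchanged under discarding the head of the sequence, which exhibits $\tailcond{p}^\sharp$ as invariant under a Bayesian inverse of the shift $X^s$ with respect to $p$; the dynamical-invariance lemma (Lemma~\ref{lem:invariant_observable}, proved using the conditionals axiom) then transfers this invariance from the time-reversed shift to $X^s$ itself. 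Without an argument of this kind --- or an explicit categorical substitute for the backward-martingale convergence you allude to, which the framework does not provide --- the central claim of your only-if direction remains unproven.
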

	
	In particular, this result says that the outputs of $p$ are conditionally independent and identically distributed.
	As we will see in the proof, the conditioning here is with respect to a synthetic version of the tail $\sigma$-algebra, thereby reproducing this classical aspect of de Finetti's Theorem as well.
	In fact, the proof uses an abstract version of the well-known result of measure-theoretic probability that the exchangeable \mbox{$\sigma$-algebra} and the tail $\sigma$-algebra coincide up to sets of measure zero~\cite[Corollary~12.18]{klenke2014probability}.
	In the following synthetic version of this result, the role of these $\sigma$-algebras is played by all possible ways of ``probing'' joint distributions of $X^\N$ by composing with a deterministic (or almost surely deterministic) \mbox{morphism $f$}.
	\begin{proposition}\label{prop:tail_algebra}
		Let $\cC$ be a Markov category with conditionals and countable Kolmogorov powers.
		Let $p \colon A \to X^\N$ be exchangeable, and suppose that ${f \colon X^\N \to Y}$ is $\as{p}$ deterministic. 
		Then the following are equivalent:
		\begin{enumerate}
			\item Finite permutation invariance: For every finite permutation $\sigma$, we have
				\begin{equation}\label{eq:perm_invariance}
					f \ase{p} f \circ X^\sigma.
				\end{equation}
				
			\item Shift invariance: For the successor function $s \colon \N \to \N$ mapping $n$ to $n+1$, we have
				\begin{equation}\label{eq:shift_invariance}
					f \ase{p} f \circ X^s.
				\end{equation}
		\end{enumerate}
	\end{proposition}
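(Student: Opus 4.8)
The plan is to prove the two implications separately, using throughout the contravariant functoriality of the reindexing construction, $X^{i \circ j} = X^{j} \circ X^{i}$ for injections $i,j \colon \N \to \N$, together with the observation that exchangeability of $p$ extends from permutations to \emph{arbitrary} injections: for every injection $i$ one has $X^{i} \circ p = p$. Indeed, each finite marginal $\pi_F \circ X^i \circ p$ equals, up to the canonical relabelling, the marginal $\pi_{i(F)} \circ p$, which by finite permutation invariance of $p$ agrees with $\pi_F \circ p$; since $X^i \circ p$ and $p$ then have the same finite marginals, the universal property of the Kolmogorov power (Definition~\ref{def:inf_product}) forces $X^i \circ p = p$. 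In particular $X^s \circ p = p$ and $X^\sigma \circ p = p$. I will also use the standard fact that $p$-almost sure equality is preserved under precomposition with a deterministic morphism $d$ satisfying $d \circ p = p$; all the reindexings $X^i$ are deterministic, being assembled from the product projections, so this applies to $d = X^s$ and to $d = X^\sigma$.

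For the implication $(2) \Rightarrow (1)$ I would argue purely algebraically. First, iterating shift invariance gives $f \ase{p} f \circ X^{s^N}$ for every $N$: precomposing $f \ase{p} f \circ X^s$ with the reference-preserving deterministic map $X^s$ yields $f \circ X^s \ase{p} f \circ X^{s^2}$, and induction closes the chain. Now fix a finite permutation $\sigma$ and choose $N \ge n_\sigma$. Since $\sigma$ fixes every $n > n_\sigma$, we have $s^N(n) = n+N > n_\sigma$ for all $n \ge 1$, hence $\sigma(n+N) = n+N$, so $\sigma \circ s^N = s^N$ as functions and thus $X^{s^N} \circ X^\sigma = X^{\sigma \circ s^N} = X^{s^N}$. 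Precomposing $f \ase{p} f \circ X^{s^N}$ with the reference-preserving deterministic map $X^\sigma$ therefore gives
\[ f \circ X^\sigma \ase{p} f \circ X^{s^N} \circ X^\sigma = f \circ X^{s^N} \ase{p} f, \]
which is exactly $(1)$. This direction is finite and limit-free, matching the classical fact that the shift-invariant $\sigma$-algebra is always contained in the exchangeable one.

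For the converse $(1) \Rightarrow (2)$, which is the substantial direction, I would first reduce the desired equation $f \ase{p} f \circ X^s$ between morphisms $A \to Y \otimes X^\N$ (via Definition~\ref{def:ase}) to its finite marginals, using the universal property of $X^\N$: it suffices to show, for every finite $F \subseteq \N$, that $(f \otimes \pi_F) \circ \cop_{X^\N} \circ p$ and $(f \circ X^s \otimes \pi_F) \circ \cop_{X^\N} \circ p$ coincide, i.e.\ that the joint behaviour of ``the value of $f$ together with the coordinates in $F$'' agrees for the input $X$ and for the shifted input. Fixing such an $F$ and any $N \ge \max F$, I would then apply permutation invariance to the cyclic permutation $\gamma_N$ defined by $\gamma_N(i) = i+1$ for $1 \le i \le N$ and $\gamma_N(N+1) = 1$, which agrees with $s$ on the first $N$ coordinates, obtaining $(f \otimes \pi_F) \circ \cop_{X^\N} \circ p = (f \circ X^{\gamma_N} \otimes \pi_F) \circ \cop_{X^\N} \circ p$.

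The hard part will be the final step: passing from the cyclic permutations $\gamma_N$ to the shift $s$. The reindexings $X^{\gamma_N}$ and $X^s$ agree only on the first $N$ coordinates and differ in the tail, so $f \circ X^{\gamma_N}$ and $f \circ X^s$ need not be equal for any \emph{fixed} $N$, and one must control the tail as $N$ grows. This is precisely the synthetic counterpart of the classical statement that, under an exchangeable measure, the exchangeable $\sigma$-algebra is contained in the tail $\sigma$-algebra up to null sets, whose measure-theoretic proof runs through reverse-martingale convergence. I expect the real work to lie in supplying a \emph{limit-free} replacement for that convergence: the natural device is to exploit that $f$ is $p$-almost surely deterministic, so that $f$ behaves as an essentially rigid, point-valued morphism, and to combine the agreement of $f \circ X^{\gamma_N}$ with $f$ on arbitrarily long initial segments with the universal property of the Kolmogorov power in order to upgrade this to genuine $p$-almost sure equality of $f \circ X^s$ and $f$. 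Isolating the exact mechanism by which almost sure determinism stands in for the martingale argument is where I anticipate the main obstacle.
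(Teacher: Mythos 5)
Your direction $(2)\Rightarrow(1)$ is correct and essentially the paper's argument: the identity $\sigma\circ s^{N}=s^{N}$ for $N\ge n_\sigma$ is exactly equation~\eqref{eq:delete_perm}, and your precomposition step is legitimate --- although the ``standard fact'' you invoke (that $g\ase{p}h$ implies $g\circ d\ase{p}h\circ d$ for deterministic $d$ with $d\circ p=p$) is not purely formal from Definition~\ref{def:ase}: the na\"ive diagram chase only yields the equality with $d$ applied to the witness wire as well, and removing it requires conditioning, i.e.\ the existence of conditionals, which is fortunately among the hypotheses. The paper sidesteps this by running the chain \eqref{eq:perm_from_shift} directly, using that $X^{\sigma^{-1}}$ is a Bayesian inverse of $X^{\sigma}$ with respect to $p$.

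The genuine gap is in $(1)\Rightarrow(2)$, and you have located it accurately yourself: the cyclic permutations $\gamma_N$ agree with $s$ only on initial segments, and nothing in your outline closes the distance to $X^{s}$. The reduction to finite marginals does not help, because the marginals you can control are those of the \emph{witness} wire, while $f$ itself genuinely depends on the whole tail of its input; ``combine with the universal property of the Kolmogorov power'' therefore cannot upgrade agreement on initial segments to $f\circ X^{s}\ase{p}f$. The paper's mechanism avoids approximating $s$ by permutations altogether. First, permutation invariance of $f$ and exchangeability of $p$ show that the joint morphism $(f\otimes\id)\circ\cop_{X^\N}\circ p\colon A\to Y\otimes X^\N$ is exchangeable in its $X^\N$ output. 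Spreadability (Lemma~\ref{lem:spreadability}), applied in this bipartite form, then yields invariance of this joint under $X^{s}$ acting on the witness output, which says precisely that $f\ase{p}f\circ(X^{s})^\dagger$ for a Bayesian inverse $(X^{s})^\dagger$ of the shift with respect to $p$. Finally, Lemma~\ref{lem:invariant_observable} converts invariance under $t^\dagger$ into invariance under $t$ for any $\as{p}$ deterministic observable; this last lemma is exactly the ``limit-free replacement for reverse-martingale convergence'' you anticipated, and it is the ingredient missing from your outline. Without it (or an equivalent), the substantial direction remains unproved.
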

	The successor function can be equivalently defined as the inclusion of the second component in the coproduct decomposition $\N \cong 1 + \N$ and we depict its action on the Kolmogorov power $X^\N$ by either of the two following string diagrams:
	\begin{equation*}\label{eq:shifting}
		\tikzfig{shifting}
	\end{equation*}
	This morphism effectively discards the first component of $X^\N$. 
	
\begin{remark}
	While Theorem~\ref{syntheticthm} captures the key part of de Finetti's Theorem characterizing exchangeable morphisms, we have not been able to prove that our assumptions imply the uniqueness of the ``de Finetti measure'' $\mu$ in equation~\eqref{eq:de_finetti_claim}.
	This property \emph{is} often proven as part of classical versions of de Finetti's Theorem including the one of Hewitt and Savage for compact Hausdorff spaces~\cite{hewitt1955symmetric}.
\end{remark}
	
	Here is how we can use Theorem~\ref{syntheticthm} to obtain the classical de Finetti's Theorem in the form of Theorem~\ref{classicalthm}. 
	First of all, we take as $\cat{C}$ the category $\cat{BorelStoch}$, which satisfies the relevant assumptions. 
	Moreover, since the usual statement is given for the case of probability measures as opposed to Markov kernels with nontrivial domain, it suffices to consider the case of $A = I$. 
	
	Then the left-hand side of equation \eqref{eq:de_finetti_claim} instantiated in our case is an exchangeable probability measure on $X^\N$ in the sense of Section~\ref{sec:classical}.
	Theorem~\ref{syntheticthm} says that there exists a Markov kernel $\mu \colon I \to PX$ (equivalently, a probability measure on $PX$) such that $p$ can be written as in the right-hand side of equation~\eqref{eq:de_finetti_claim}. 
	Evaluating this equation on a cylinder set corresponding to a sequence of measurable subsets $S_1, \dots, S_n$ of $X$ gives
	\begin{align*}
		p \mleft( S_1\times \dots \times S_n \mright) 
			&= \int_{PX} \mleft( \prod_{i=1}^n \int_{PX} \samp (S_i | q_i) \mright) \cop(dq_1 \otimes \dots \otimes dq_n | q) \,  \mu (dq)  \\
			&= \int_{PX} \prod_{i=1}^n \samp (S_i | q) \, \mu (dq)  \\
			&= \int_{PX} q(S_1) \times \cdots \times q(S_n) \, \mu (dq),
	\end{align*}
	where we used the fact that $ \samp (S | q) = q(S)$. 
	Thus, Theorem~\ref{classicalthm} follows from Theorem~\ref{syntheticthm}.

	Similarly, here is how Proposition~\ref{prop:tail_algebra} recovers the classical coincidence of exchangeable $\sigma$-algebra and tail $\sigma$-algebra up to null sets.
	In $\BorelStoch$, a deterministic morphism $f \colon X^\N \to \{0,1\}$ corresponds to an event in $X^\N$, and such $f$ satisfies the shift invariance condition if and only if the corresponding event is, up to null sets, in the tail $\sigma$-algebra. 
	Likewise, such $f$ satisfies finite permutation invariance if and only if the event is, up to null sets, in the exchangeable $\sigma$-algebra.

	\begin{remark}
		Note that by making the same argument with $A$ an arbitrary standard Borel space, one obtains a parametric version of the classical Theorem~\ref{classicalthm}.
		It says that if the exchangeable probability measure $p$ depends measurably on a parameter, then $\mu$ can also be chosen to depend measurably on the same parameter.
		This is the de Finetti theorem for exchangeable Markov kernels mentioned in the introduction.
	\end{remark}

	\begin{remark}
		It is conceivable that versions of de Finetti's Theorem applying to larger classes of spaces, such as the result of Hewitt and Savage for exchangeable Radon probability measures on compact Hausdorff spaces, can also be obtained from Theorem~\ref{syntheticthm} by instantiating it in a suitable Markov category. 
		The recent results of Forr\'e~\cite{forre2021conditional} may be relevant here to establish the existence of conditionals.
	\end{remark}

\section{Diagrammatic Proof of de Finetti's Theorem}
\label{sec:proof}
	
	\newcommand{\flowchartbox}[2]{\fbox{\parbox{#1}{\centering\textrm{#2}}}}
	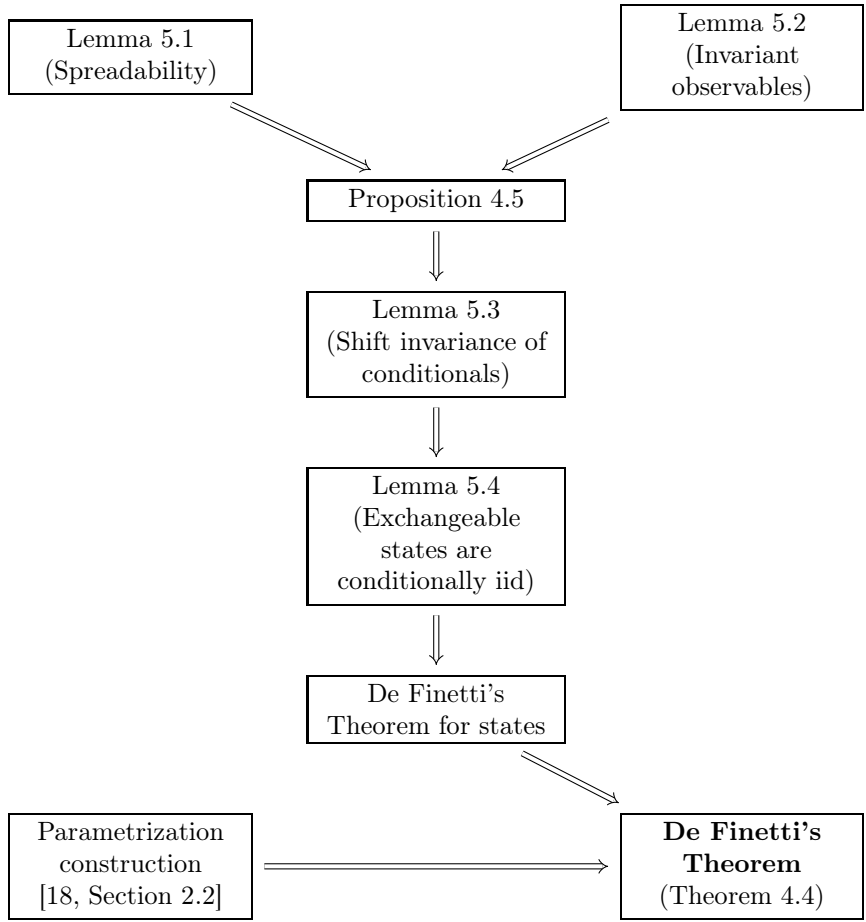
\begin{figure}
		\[\begin{tikzcd}[ampersand replacement=\&, row sep=1.6pc, column sep=12pt, every arrow/.append style={Rightarrow}]
			\flowchartbox{3cm}{Lemma~\ref{lem:spreadability} (Spreadability)} \&\& \flowchartbox{3cm}{Lemma~\ref{lem:invariant_observable} (Invariant observables)} \\
			\& \flowchartbox{3.2cm}{Proposition~\ref{prop:tail_algebra}} \\
			\& \flowchartbox{3.2cm}{Lemma~\ref{lem:tail_is_tail} \\ (Shift invariance of conditionals)} \\
			\& \flowchartbox{3.2cm}{Lemma~\ref{lem:conditionally_iid} (Exchangeable states are conditionally iid)} \\
			\& \flowchartbox{3.2cm}{De Finetti's Theorem for states} \\
			\flowchartbox{3cm}{Parametrization construction \\ \cite[Section 2.2]{fritz2020representable}} \&\& \flowchartbox{3cm}{\textbf{De Finetti's Theorem} \\ (Theorem~\ref{syntheticthm})}
			\arrow[from=1-1, to=2-2]
			\arrow[from=1-3, to=2-2]
			\arrow[from=2-2, to=3-2]
			\arrow[from=3-2, to=4-2]
			\arrow[from=4-2, to=5-2]
			\arrow[from=5-2, to=6-3]
			\arrow[from=6-1, to=6-3]
		\end{tikzcd}\]
		\caption{Overall structure of the proof of Theorem~\ref{syntheticthm}.}
		\label{fig:proof_flowchart}
	\end{figure}
	
	The overall structure of the proof is presented in Figure~\ref{fig:proof_flowchart}.
	We first present a number of lemmas that may be useful in other contexts too.
	We assume throughout that we are in a Markov category $\cC$ satisfying Assumption~\ref{three_assumptions}.
	
	\begin{lemma}[spreadability]
		\label{lem:spreadability}
		Let $p \colon A \to Y \otimes X^\N$ be exchangeable in the second factor. 
		Then for every injective function $i \colon \N \to \N$, we also have
		\begin{equation}\label{eq:spreadability}
			\tikzfig{generalized_spreadability}
		\end{equation}
	\end{lemma}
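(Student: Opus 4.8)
The plan is to exploit the universal property of the Kolmogorov power $X^\N$ from Definition~\ref{def:inf_product}: a morphism into $Y \otimes X^\N$ is uniquely determined by its family of finite marginals, so to prove \eqref{eq:spreadability} it suffices to show that $(\id_Y \otimes X^i) \circ p$ and $p$ have the same marginal on $X^F$ for every finite subset $F \subseteq \N$. The $F$-marginal of $X^i$ is by construction $\pi_F \circ X^i = X^i_F$, namely the relabelling isomorphism $X^{i(F)} \cong X^F$ precomposed with the projection $\pi_{i(F)}$; the key observation is that this marginal depends only on the restriction $i|_F$.

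The combinatorial heart of the argument is then that any injection, restricted to a finite set, agrees with a finite permutation there. For fixed finite $F$ I would set $G = F \cup i(F)$ and note that $i|_F \colon F \to i(F)$ is a bijection onto a subset of the finite set $G$ having the same cardinality as $F$; hence $|G \setminus F| = |G \setminus i(F)|$, and any bijection between these two complements extends $i|_F$ to a permutation of $G$, which in turn extends by the identity to a finite permutation $\sigma \colon \N \to \N$ with $\sigma|_F = i|_F$. Since $\sigma$ and $i$ agree on $F$ we have $\sigma(F) = i(F)$ and the relabelling isomorphisms coincide, so $X^i_F = X^\sigma_F$, that is $\pi_F \circ X^i = \pi_F \circ X^\sigma$.

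To finish I would chain these identities. For each finite $F$,
\[
(\id_Y \otimes \pi_F) \circ (\id_Y \otimes X^i) \circ p
= (\id_Y \otimes \pi_F) \circ (\id_Y \otimes X^\sigma) \circ p
= (\id_Y \otimes \pi_F) \circ p,
\]
where the first equality is the marginal identity just established and the second is exchangeability in the second factor (Definition~\ref{def:exchangeability}) applied to the finite permutation $\sigma$. As the two sides of \eqref{eq:spreadability} therefore agree on every finite marginal, the universal property of $X^\N$ forces them to be equal.

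I expect the only delicate point to be the bookkeeping in the combinatorial step---checking that the relabelling isomorphisms defining $X^i_F$ and $X^\sigma_F$ genuinely coincide once $\sigma$ agrees with $i$ on $F$---rather than anything probabilistic. Once the reduction to finite marginals has been made, exchangeability does all the real work, and the remaining manipulations are purely diagrammatic.
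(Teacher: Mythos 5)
Your proposal is correct and follows essentially the same route as the paper's proof: reduce to finite marginals via the universal property of the Kolmogorov power, find a finite permutation $\sigma$ agreeing with $i$ on the given finite set $F$ so that $\pi_F \circ X^i = \pi_F \circ X^\sigma$, and invoke exchangeability. The only difference is that you spell out the combinatorial extension of $i|_F$ to a finite permutation, which the paper leaves implicit.
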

	
	In the concrete setting of $\cat{BorelStoch}$, this statement reads as follows in the case $A = Y = I$.
	Consider a probability measure $p\in P(X^\N)$ specifying an exchangeable joint distribution of infinitely many random variables. 
	If we marginalize over the first component, then the resulting distribution is again $p$. 
	In other words, if $p$ is exchangeable, then it is also shift-invariant.
	More generally, even if we marginalize over any number of components whose indices are specified by the complement of the image of some injective function $i \colon \N \to \N$, the resulting distribution is still $p$.
	The same logic can be applied in the presence of a non-trivial covariate $Y$ and parameter $A$, i.e.\ when $p$ is a Markov kernel $A \to Y \otimes X^\N$. 
	
	The relevant argument was essentially already given in the proof of~\cite[Section~5]{fritzrischel2019zeroone}, but we reproduce the argument here in the present more general context. 
	Note that this works the same way for any other infinite set in place of $\N$ (if the relevant Kolmogorov power exists).
	
	\begin{proof}
		Since $X^\N$ is the Kolmogorov power of $X$, it suffices to prove equation \eqref{eq:spreadability} after composition with each of the infinite marginalization maps ${\pi_F \colon X^\N \to X^F}$ where $F$ is a finite subset of $\N$ as before.
		That is, we need to prove
		\begin{equation}\label{eq:generalized_spreadability_proof1}
			\tikzfig{generalized_spreadability_proof1}
		\end{equation}
		for each finite $F \subseteq \N$.
		For any given $i$ and $F$, we can find a finite permutation $\sigma \colon \N \to \N$ whose action on $F$ coincides with that of $i$.
		By definitions of $X^\sigma$ and $X^i$, this implies $\pi_F \circ X^\sigma = \pi_F \circ X^i$, and the claim now follows by the assumed finite permutation invariance.
	\end{proof}

	The next lemma can be thought of as a synthetic statement about stochastic dynamical systems where $t \colon \Theta \to \Theta$ is the dynamics, $p$ is an invariant probability measure, and $f$ is an observable.
	
	\begin{lemma}[dynamically invariant observables]
		\label{lem:invariant_observable}
		Let $p \colon I \to \Theta$ and $t \colon \Theta \to \Theta$ be two morphisms satisfying $t \, p = p$. 
		If $t^\dagger$ denotes a Bayesian inverse of $t$ with respect to $p$, then every \as{$p$} deterministic $f \colon \Theta \to X$ satisfies
		\begin{equation}\label{eq:invariant_observable}
			\tikzfig{invariant_observable}
		\end{equation}
	\end{lemma}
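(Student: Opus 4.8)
The plan is to prove the claimed $\as{p}$ equality \eqref{eq:invariant_observable} by first unfolding it, via Definition~\ref{def:ase}, into an ordinary equation of morphisms out of the monoidal unit, and then massaging the two sides using exactly three ingredients: the defining equation of the Bayesian inverse $t^\dagger$, the invariance hypothesis $t\,p = p$, and the $\as{p}$-determinism of $f$. Concretely, I would compose everything in sight with the state $p$, together with a retained copy of its output to serve as the witness wire demanded by Definition~\ref{def:ase}, so that the goal becomes an honest equality of two morphisms built from $p$, $t$, $t^\dagger$ and $f$, which I can then attack by rewriting subdiagrams.

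The heart of the argument is a single ``turn-around'' move. Since $t\,p = p$, the defining diagram of a Bayesian inverse (Definition~\ref{def:Bayesian_inverse}), instantiated at $t$ with prior $p$, says that the joint state obtained by copying $p$ and applying $t$ to one copy coincides with the joint state obtained by applying $t^\dagger$ backwards along the other copy. This lets me trade a forward application of $t$ sitting on top of $\cop\circ p$ for a backward application of $t^\dagger$ on the copied wire, without changing the joint state. I would apply this identity so as to replace the dynamics by its inverse on precisely the wire that $f$ does \emph{not} see, keeping $f$ attached to the other copy.

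At that stage the two copies of $\Theta$ carry the same value $\as{p}$, so I can invoke the $\as{p}$-determinism of $f$ (Definition~\ref{def:as_det}) to move $f$ across the copy node, replacing $\cop\circ f$ by $(f\otimes f)\circ\cop$ (or conversely); this is the only step that genuinely consumes the hypothesis that $f$ is $\as{p}$ deterministic. After it, the comonoid laws for $\cop$ and $\discard$, together with naturality of the symmetry, let me reassemble the diagram into the right-hand side of \eqref{eq:invariant_observable}, and re-folding the witness wire via Definition~\ref{def:ase} yields the desired $\as{p}$ equality.

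The main obstacle I anticipate is the bookkeeping of almost-sure versus strict equalities. The inverse $t^\dagger$ is only determined up to $\as{p}$ equality and its defining equation holds only after composition with $p$; likewise $\as{p}$-determinism may legitimately be used only ``under $p$''. Hence I must ensure that every rewrite is carried out on a subdiagram that is still being evaluated against $p$, equivalently that all intermediate equalities are understood in the $\as{p}$ sense rather than as strict identities of morphisms. Keeping the witness wire present throughout is exactly what makes these applications valid, and verifying this is where the real care lies; the underlying categorical manipulations themselves are routine.
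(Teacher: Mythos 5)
There is a genuine gap: equation \eqref{eq:invariant_observable} is not a single almost-sure equality but an \emph{implication} --- it asserts that \emph{if} $f \ase{p} f\circ t^\dagger$ (invariance of the observable under the time-reversed dynamics), \emph{then} $f \ase{p} f\circ t$. Your proposal lists exactly three ingredients (the defining equation of $t^\dagger$, the invariance $t\,p=p$, and the \as{$p$}-determinism of $f$) and never invokes the antecedent $f\ase{p} f\circ t^\dagger$. Without that hypothesis the conclusion is false: in $\BorelStoch$, take $t$ an irrational rotation of the circle and $p$ the Haar measure; then $t\,p=p$ and $t^\dagger=t^{-1}$, yet a generic deterministic observable $f$ satisfies $f\neq f\circ t$ on a set of positive measure. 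Concretely, after your ``turn-around'' move you are left with the joint state $(t^\dagger\otimes f)\circ\cop_\Theta\circ p$, and no amount of determinism of $f$ removes the $t^\dagger$ sitting on the witness wire. The paper's argument removes it precisely by substituting the antecedent $f\circ t^\dagger\ase{p} f$ at this point, which converts the joint of $f$ and $f\circ t$ under $p$ into the diagonal joint $(f\otimes f)\circ\cop_\Theta\circ p$.

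A secondary issue is your concluding step. Passing from ``the joint of $f$ and $f\circ t$ under $p$ is diagonal'' to the almost-sure equality $f\ase{p} f\circ t$ is not achieved by comonoid laws and naturality of the symmetry alone; it is a separate result --- that an \as{$p$}-deterministic $f$ and an arbitrary $g$ whose joint under $p$ is perfectly correlated must be \as{$p$} equal --- which the paper imports from \cite[Lemma~5.5]{fritzrischel2019zeroone} (noting that its proof extends from deterministic to \as{$p$}-deterministic $f$). Your ``reassemble the diagram'' step conceals this. The turn-around move itself, and the recognition that determinism of $f$ is what licenses moving $f$ across the copy node, do match the paper's strategy; what is missing is the use of the antecedent and the final lemma.
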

	
	Before presenting the proof, let us again instantiate this for the concrete case of $\cat{BorelStoch}$, and in particular for $X=\R$. Consider a standard Borel probability space $(\Theta,p)$ and a measure-preserving Markov kernel $t \colon \Theta\to\Theta$. 
	A Bayesian inverse of $t$ is Markov kernel $t^\dagger$ satisfying
	\begin{equation}
		\int_{S} t(T|\theta)\, p(d\theta) = \int_{T} t^\dagger ( S |\theta  )  \, p(d\theta)
	\end{equation}
	for all measurable subsets $S,T \subseteq\Theta$.
	Lemma~\ref{lem:invariant_observable} says that if a (deterministic) function $f \colon \Theta \to \R$ satisfies
	\begin{equation}
		f(\theta) = \int_\Theta f(\theta')\,t^\dagger(d\theta'|\theta),
	\end{equation}
	for $p$-almost every $\theta$, then it also satisfies
	\begin{equation}
		f(\theta) = \int_\Theta f(\theta')\,t(d\theta'|\theta).
	\end{equation}
	for $p$-almost every $\theta$.
	By Lemma~\ref{lem:invariant_observable}, if $f$ is an observable invariant under the time-reversed dynamics, then it is also preserved forward in time.
	One may therefore expect this result to have further relevance for a synthetic treatment of ergodic theory.
	
	\begin{proof}
		Using the definition of Bayesian inverses, the assumed $t \, p = p$, and the antecedent statement in implication \eqref{eq:invariant_observable}, we get
		\begin{equation}\label{eq:invariant_observable_proof}
			\tikzfig{invariant_observable_proof}
		\end{equation}
		The claim now follows by~\cite[Lemma~5.5]{fritzrischel2019zeroone} whose proof, even if originally stated for deterministic $f$ only, works just as well for \as{$p$} deterministic $f$.
	\end{proof}
	
	We can present the proof of Proposition~\ref{prop:tail_algebra} now.
	It is worth noting that neither Lemma~\ref{lem:spreadability} nor Lemma~\ref{lem:invariant_observable} use the assumption of representability of $\cC$. 
	As expressed in the statement of Proposition~\ref{prop:tail_algebra}, the following proof does not require representability either, even while making use of the previous two lemmas.
	
	\begin{proof}[Proof of Proposition~\ref{prop:tail_algebra}]
		Let us argue that shift invariance as expressed by equation \eqref{eq:shift_invariance} implies finite permutation invariance as expressed by equation \eqref{eq:perm_invariance}.
		Indeed, for any finite permutation $\sigma \colon \N \to \N$, there is an integer $n_{\sigma}$ beyond which all larger integers are fixed by $\sigma$.
		With $s \colon \N \to \N$ denoting the successor function introduced in the statement of Proposition~\ref{prop:tail_algebra}, this entails $\sigma \circ s^{n_\sigma} = s^{n_\sigma}$.
		Therefore, we have
		\begin{equation}\label{eq:delete_perm}
			(X^s)^{n_{\sigma}} \circ X^{\sigma} = (X^s)^{n_{\sigma}},
		\end{equation}
		where the morphism on the right-hand side effectively discards the first $n_{\sigma}$ components of $X^\N$.
		Thus, assuming shift invariance of $f$, we can infer the permutation invariance as follows:
		\begin{equation}\label{eq:perm_from_shift}
			\tikzfig{perm_from_shift}
		\end{equation}
		In particular, the first equality holds by exchangeability of $p$ and because $X^{\sigma^{-1}}$ is a Bayesian inverse of $X^\sigma$.
		The latter holds by virtue of the fact that $X^{\sigma^{-1}}$ is deterministic and the inverse of $X^\sigma$.
		The second and fourth equations in \eqref{eq:perm_from_shift} correspond to shift invariance of $f$ applied $n_\sigma$ times, while the third is analogous to the first in reverse, using equation \eqref{eq:delete_perm} in addition.
		
		Conversely, let us now assume finite permutation invariance of $f$ and show that it implies shift invariance. 
		By the first equality of \eqref{eq:perm_from_shift}, we have that the morphism
		\begin{equation*}\label{eq:shift_from_perm1}
			\tikzfig{shift_from_perm1}
		\end{equation*}
		is exchangeable in the second output.
		Therefore, by Lemma~\ref{lem:spreadability}, we also obtain
		\begin{equation}\label{eq:shift_from_perm2}
			\tikzfig{shift_from_perm2}
		\end{equation}
		By Lemma~\ref{lem:invariant_observable}, this entails
		\begin{equation}\label{eq:shift_from_perm3}
			\tikzfig{shift_from_perm3}
		\end{equation}
		which is what we wanted to prove.
	\end{proof}
	
	For a given exchangeable morphism $p \colon I \to X^\N$, we now consider a conditional $\tailcond{p} \colon X^\N \to X$ of the first output given all others, which means that it satisfies the left equation in
	\begin{equation}\label{eq:tail_conditional_def}
		\tikzfig{tail_conditional_def}
	\end{equation}
	while the right one follows by exchangeability of $p$ and Lemma~\ref{lem:spreadability}.
	
	Such a tail conditional depends only on the tail of the product $X^\N$ in the following sense.
	
	\begin{lemma}[shift invariance of the tail conditional]
		\label{lem:tail_is_tail}
		If $p \colon I \to X^\N$ is exchangeable, then we have
		\begin{equation}\label{eq:tail_conditional_shift}
			\tikzfig{tail_conditional_shift}
		\end{equation}
	\end{lemma}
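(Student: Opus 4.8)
The plan is to deduce the statement from Proposition~\ref{prop:tail_algebra}, which turns finite permutation invariance into shift invariance for \as{$p$} deterministic morphisms. The tail conditional $\tailcond{p}$ is itself not (almost surely) deterministic, so Proposition~\ref{prop:tail_algebra} does not apply to it directly; the idea is to route the argument through its deterministic counterpart $\tailcond{p}^\sharp \colon X^\N \to PX$, which exists because $\cC$ is representable. Throughout, the reference measure on the tail copy of $X^\N$ is $p$ itself: by Lemma~\ref{lem:spreadability} the marginal of $p$ obtained by discarding the first component equals $p$ again under the shift identification, so ``\as{$p$}'' is the appropriate notion of almost-sure equality for morphisms out of the tail.

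The first step is to show that $\tailcond{p}$ is invariant under finite permutations of the tail, that is, $\tailcond{p} \ase{p} \tailcond{p} \circ X^\sigma$ for every finite permutation $\sigma$. I would obtain this from the uniqueness of conditionals up to almost sure equality~\cite[Proposition~13.6]{fritz2019synthetic} by checking that $\tailcond{p} \circ X^\sigma$ is again a conditional of $p$ with respect to the tail. Concretely, starting from the reconstruction of $p$ built out of $\tailcond{p} \circ X^\sigma$ via the defining equation~\eqref{eq:tail_conditional_def}, one uses that the tail marginal of $p$ is $X^\sigma$-invariant (exchangeability) and that $X^\sigma$ is deterministic (hence commutes with $\cop$) to rewrite this reconstruction as $(X^{\sigma^{-1}} \otimes \id) \circ p$, which is once more $p$ by exchangeability. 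Thus $\tailcond{p}$ and $\tailcond{p} \circ X^\sigma$ are both conditionals of $p$ with respect to the tail and therefore \as{$p$} equal.

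Next I would transport this invariance to the sharp. Since $\cC$ is \as{}-compatibly representable (Definition~\ref{def:as_comp_rep}), the relation $\tailcond{p} \ase{p} \tailcond{p} \circ X^\sigma$ yields $\tailcond{p}^\sharp \ase{p} (\tailcond{p} \circ X^\sigma)^\sharp$, and the right-hand side equals $\tailcond{p}^\sharp \circ X^\sigma$ by naturality of $(\ph)^\sharp$ in the deterministic morphism $X^\sigma$. Hence $\tailcond{p}^\sharp$, which is deterministic and in particular \as{$p$} deterministic, satisfies the finite permutation invariance of equation~\eqref{eq:perm_invariance}. Applying Proposition~\ref{prop:tail_algebra} to $f = \tailcond{p}^\sharp$ and the exchangeable state $p$ then gives shift invariance, $\tailcond{p}^\sharp \ase{p} \tailcond{p}^\sharp \circ X^s$. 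Post-composing with $\samp$ and using $\samp \circ \tailcond{p}^\sharp = \tailcond{p}$ recovers $\tailcond{p} \ase{p} \tailcond{p} \circ X^s$, which is equation~\eqref{eq:tail_conditional_shift}.

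I expect the main obstacle to be conceptual rather than computational: recognizing that the non-deterministic tail conditional has to be passed through its deterministic sharp in order to meet the hypotheses of Proposition~\ref{prop:tail_algebra}, and that finite permutation invariance is cheaply available from the uniqueness of conditionals whereas shift invariance is not. Care is also needed in bookkeeping the two distinct uses of the shift --- the identification of the tail with $X^\N$ (so that the reference measure is again $p$) and the subsequent shift $X^s$ appearing in the conclusion.
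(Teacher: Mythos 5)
Your proposal is correct and follows essentially the same route as the paper's proof: establish $p$-a.s.\ finite permutation invariance of $\tailcond{p}$ from the defining equation of the conditional and exchangeability, transfer it to the deterministic morphism $\tailcond{p}^\sharp$ via \as{}-compatible representability and naturality of $(\ph)^\sharp$, apply Proposition~\ref{prop:tail_algebra} to obtain shift invariance, and recover the claim by composing with $\samp$. The only difference is presentational: you spell out the permutation-invariance step via uniqueness of conditionals, which the paper leaves implicit.
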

	
	Here is what the statement looks like in $\cat{BorelStoch}$. 
	Let $p$ be an exchangeable probability measure on $X^\N$, and let $\tailcond{p}$ be a (regular) conditional of the first component given the other ones. 
	Denote a generic element of $X^\N$ by $(x_1,x_2,\dots)$. 
	Then for each measurable subset $S \subseteq X$, we have that 
	\begin{equation}
		\tailcond{p}(S|x_1,x_2,\dots) = \tailcond{p}(S|x_2,x_3,\dots)
	\end{equation}
	holds for $p$-almost all sequences $(x_1,x_2,\dots)\in X^\N$. Iterating this equation shows that $\tailcond{p}$ only depends on the tail.
	
	\begin{proof}
		By the definition of $\tailcond{p}$ and exchangeability of $p$, we have
		\begin{equation}\label{eq:tail_conditional_exc}
			\tikzfig{tail_conditional_exc}
		\end{equation}
		for any finite permutation $\sigma \colon \N \to \N$.
		By the $\as{}$-compatible representability of $\cC$ (Definition~\ref{def:as_comp_rep}), this is equivalent to 
		\begin{equation}
			\left( \tailcond{p} \circ X^{\sigma} \right)^\sharp \;\ase{p}\; \tailcond{p}^\sharp.
		\end{equation}
		Since $X^{\sigma}$ is deterministic, we have 
		\begin{equation}
			\left( \tailcond{p} \circ X^{\sigma} \right)^\sharp \;=\; \tailcond{p}^\sharp \circ X^{\sigma},
		\end{equation}
		which is an instance of the naturality of~\eqref{eq:representability},
		and conclude that the deterministic morphism $\tailcond{p}^\sharp$ is $p$-almost surely finite permutation invariant.
		Thus, by Proposition~\ref{prop:tail_algebra}, it is also shift invariant.
		This proves the claim after composition with the sampling map $\samp_X$.
	\end{proof}
	
	\begin{lemma}[exchangeable states are conditionally iid]
		\label{lem:conditionally_iid}
		If $p \colon I \to X^\N$ is an exchangeable morphism, then we have
		\begin{equation}\label{eq:conditionally_iid}
			\tikzfig{conditionally_iid}
		\end{equation}
	\end{lemma}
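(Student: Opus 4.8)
The plan is to exhibit $p$ as iid sampling from the single random distribution $\mu := \tailcond{p}^\sharp \circ p \colon I \to PX$, so that, once we condition on the $PX$-valued latent produced by $\tailcond{p}^\sharp$, the components of $X^\N$ become independent and identically distributed. First I would start from the defining equation~\eqref{eq:tail_conditional_def} of the tail conditional, which expresses the first output of $p$ as the result of feeding the tail into $\tailcond{p}$ while retaining a copy of that tail; by exchangeability together with Lemma~\ref{lem:spreadability}, the tail marginal here is again $p$. Using representability I then replace $\tailcond{p}$ by $\samp_X \circ \tailcond{p}^\sharp$, so that the first component is literally a sample from the $PX$-valued wire obtained by applying $\tailcond{p}^\sharp$ to the tail.

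The crucial ingredient is Lemma~\ref{lem:tail_is_tail}: the deterministic morphism $\tailcond{p}^\sharp$ is $\as{p}$ shift invariant, so the distribution it outputs is unchanged when its input tail is shifted. I would use this, together with the $\as{}$-compatible representability of $\cC$ (Definition~\ref{def:as_comp_rep}) to propagate the almost-sure equalities through copies, in order to slide every occurrence of $\tailcond{p}^\sharp$ onto one and the same tail. This extracts a single shared $PX$-wire $\mu$ which, via the countable copy map $PX \to (PX)^\N$, can be made to feed every component simultaneously.

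Since $X^\N$ is a Kolmogorov power, Definition~\ref{def:inf_product} lets me verify~\eqref{eq:conditionally_iid} by checking it after composition with each finite projection $\pi_F$, and I would proceed by induction on $|F|$ (taking $F = \{1,\dots,n\}$). Having peeled the first component off as an independent sample $\samp_X \circ \tailcond{p}^\sharp$ drawn from $\mu$, the residual tail is again exchangeable by Lemma~\ref{lem:spreadability} and carries the same tail conditional by shift invariance, so the identical step applies to the next component. Threading the shared latent $\mu$ through the copy maps turns the successive conditionals into independent, identically distributed samples, and composing the resulting family with $\samp_X^{\N}$ yields exactly the right-hand side of~\eqref{eq:conditionally_iid}.

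The main obstacle is the bookkeeping of this iteration: each application of the tail conditional peels off one component but leaves behind a copied tail, and I must check that the $PX$-valued latent seen by the $n$-th component agrees $\as{p}$ with the one seen by the first. Equivalently, the difficulty is showing that repeatedly conditioning on successive tails collapses to conditioning a single time on $\mu$. Propagating these almost-sure equalities correctly through the copy maps, and across infinitely many components, is precisely what the shift invariance of Lemma~\ref{lem:tail_is_tail}, the $\as{}$-compatible representability, and the Kolmogorov universal property are there to control.
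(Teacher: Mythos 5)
Your proposal follows essentially the same route as the paper: reduce to finite marginals via the universal property of the Kolmogorov power, then induct on $n$, peeling off one component at a time using the defining equation \eqref{eq:tail_conditional_def} of the tail conditional and identifying the conditional seen by each successive component via the shift invariance of Lemma~\ref{lem:tail_is_tail}, with spreadability guaranteeing that the residual tail is again distributed as $p$. The only divergence is cosmetic: your detour through $PX$, $\tailcond{p}^\sharp$, and \as{}-compatible representability is not needed for this lemma --- the induction runs entirely at the level of $\tailcond{p}$ itself --- and in the paper that machinery enters only afterwards, when deriving Theorem~\ref{syntheticthm} from this lemma.
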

	
	Concretely, in $\cat{BorelStoch}$, Lemma~\ref{lem:conditionally_iid} says the following. 
	Let $p$ be an exchangeable probability measure on $X^\N$. 
	Then for every cylinder defined by measurable subsets $S_1,\dots, S_N\subseteq X$, we have  
	\begin{equation}
		p(S_1\times\dots\times S_n) = \int_{X^\N} \tailcond{p}(S_1|\xi) \times \cdots \times \tailcond{p}(S_n|\xi) \, p(d\xi) ,
	\end{equation}
	where $\xi$ is shorthand for a generic sequence $(x_1,x_2,\dots)\in X^\N$. 
	
	\begin{proof}
		By the bijection of Definition~\ref{def:inf_product}, it is enough to prove the statement for all finite products, i.e.~to prove that
		\begin{equation}\label{eq:conditionally_iid_proof1}
			\tikzfig{conditionally_iid_proof1}
		\end{equation}
		for every $n \in \N$. 
		Using induction, the base case $n = 0$ is trivial. 
		In order to get from $n$ to $n+1$, we use
		\begin{equation*}\label{eq:conditionally_iid_proof2}
			\tikzfig{conditionally_iid_proof2}
		\end{equation*}
		where the first step is by Lemma~\ref{lem:tail_is_tail} and the third one by equation \eqref{eq:tail_conditional_def}.
	\end{proof}
	
\smalltitle{Parametrization construction.}	
	
	Lemma~\ref{lem:conditionally_iid} allows us to prove de Finetti's Theorem for exchangeable morphisms out of $I$, but our Theorem~\ref{syntheticthm} is meant to be applicable to exchangeable morphisms with an arbitrary domain $A$.
	In order to make this logical transition, we interpret the statement of Lemma~\ref{lem:conditionally_iid} in a new Markov category $\cC_A$ where every morphism $f \colon A \to Y$ of $\cC$ can be reinterpreted as a morphism $f \colon I \to Y$ in $\cC_A$.
	Specifically, we use the \emph{parametric Markov category} $\cC_A$ \cite[Section 2.2]{fritz2020representable}, which is defined in terms of $\cC$ and $A$ by taking its objects to coincide with those of $\cC$ and its morphisms to be
	\begin{equation}
		\cC_A (B, Y) \coloneqq \cC( A \otimes B , Y).
	\end{equation}
	In a sense, $\cC_A$ contains the same data as $\cC$; it is just organized differently.
	When composing morphisms in $\cC_A$ we use the composition of $\cC$ and distribute a copy of $A$ to each of the morphisms in the composition.
	The key aspect of the parametrization construction for our proof is that as long as $\cC$ satisfies Assumption~\ref{three_assumptions}, so does the parametric category $\cC_A$.
	The arguments for conditionals and \as{}-compatible representability have been presented in \cite{fritz2020representable} as Lemma 2.10 and Lemma 3.22 respectively.
	Here, we also spell out the argument for Kolmogorov powers.
	\begin{lemma}\label{lem:product_in_parametric}
		If $X^\N$ is a countable Kolmogorov power of $X$ in $\cC$, then $X^\N$ is also a countable Kolmogorov power of $X$ in the parametric Markov category $\cC_A$.
	\end{lemma}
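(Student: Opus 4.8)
The plan is to exploit the canonical embedding of $\cC$ into $\cC_A$ and to reduce every requirement of Definition~\ref{def:inf_product} for $\cC_A$ to the corresponding requirement for $\cC$, applied with the domain enlarged by a tensor factor of $A$. (Throughout I write $B$ for the generic domain object appearing in Definition~\ref{def:inf_product}, to avoid a clash with the parameter $A$.) Recall first that $\cC_A$ carries an identity-on-objects functor $J \colon \cC \to \cC_A$ sending $\phi \colon B \to Y$ to $\phi \circ (\discard_A \tensor \id_B) \in \cC(A \tensor B, Y) = \cC_A(B,Y)$; using the counit law $(\discard_A \tensor \id_A) \circ \cop_A = \id_A$ one checks that $J$ is a Markov functor, preserving the monoidal product, copy, and discard maps on the nose. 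In particular $J$ preserves deterministic morphisms, since Definition~\ref{def:deterministic} is phrased entirely in terms of copying. I would use this to transport the projections: $\pi_F$ and $\pi_{F,F'}$, being deterministic in $\cC$, have deterministic images $J\pi_F$ and $J\pi_{F,F'}$ in $\cC_A$, and these are the candidate projections witnessing the Kolmogorov power.

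Next I would establish the key bookkeeping identity. By definition $\cC_A(B, X^\N \tensor Y) = \cC(A \tensor B, X^\N \tensor Y)$, and likewise a family $(f_F)$ of $\cC_A$-morphisms $B \to X^F \tensor Y$ is literally the same data as a family $(g_F)$ of $\cC$-morphisms $A \tensor B \to X^F \tensor Y$. The only nontrivial point is that the two notions of compatibility agree. Composing $J\pi_{F,F'} \tensor \id_Y$ with $f_F$ in $\cC_A$ (write $\circ_A$ for this composition) introduces, by the composition rule of $\cC_A$, an extra copy of $A$ fed alongside the output of $g_F$ into $J\pi_{F,F'}$; but $J\pi_{F,F'}$ discards its $A$-input, so that wire is created and immediately deleted and vanishes by the counit law, yielding $(J\pi_{F,F'} \tensor \id_Y) \circ_A f_F = (\pi_{F,F'} \tensor \id_Y) \circ g_F$. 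Hence the $\cC_A$-compatibility condition is equivalent to the $\cC$-compatibility condition $(\pi_{F,F'} \tensor \id_Y) \circ g_F = g_{F'}$. This collapse of the parametric composition rule is the step I expect to require the most care; everywhere else the argument is purely formal.

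With the identification in hand, the bijection required for $\cC_A$ is obtained by composing three bijections: the equality of hom-sets $\cC_A(B, X^\N \tensor Y) = \cC(A \tensor B, X^\N \tensor Y)$, the Kolmogorov power bijection of $\cC$ at the domain $A \tensor B$, and the identification of $\cC_A$-families with $\cC$-families just described. Since the $\cC$-bijection is implemented by postcomposition with the projections $\pi_F$ (as recorded after Definition~\ref{def:inf_product}) and $J$ is functorial, naturality in $B$ and $Y$ follows from naturality of the $\cC$-bijection together with the same counit-collapse argument, which also handles the nontrivial precomposition by $\cC_A$-morphisms $B' \to B$. Finally, to confirm that $X^\N$ is a Kolmogorov \emph{power} in $\cC_A$, I would specialize to $Y = I$, $B = X^\N$, and the identity $\id_{X^\N}$ of $\cC_A$: its $\cC$-counterpart is $g = \discard_A \tensor \id_{X^\N}$, so the associated family is $(\pi_F \circ g)_F = (J\pi_F)_F$, which is deterministic because $J$ preserves determinism. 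This verifies the determinism requirement and completes the proof.
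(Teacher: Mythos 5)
Your proposal is correct and follows essentially the same route as the paper: both compose the hom-set identity $\cC_A(B, X^\N\otimes Y)=\cC(A\otimes B, X^\N\otimes Y)$ with the Kolmogorov bijection of $\cC$ at the enlarged domain $A\otimes B$ and the identification of compatible families, then observe that the marginalization maps in $\cC_A$ are the $\cC$-projections with $A$ discarded and hence deterministic. You merely spell out two points the paper leaves implicit, namely the counit-collapse showing the two compatibility conditions agree and the preservation of determinism under the embedding $\cC\to\cC_A$.
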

	\begin{proof}
		Let $B,Y$ be arbitrary objects of $\cC_A$.
		We have the following three bijective correspondences
		\begin{equation*}
			\begin{tikzcd}
				{f \in \cC_A(B,X^\N \otimes Y)} \arrow[leftrightarrow, r]						&	{f \in \cC(A \otimes B, X^\N \otimes Y)} \arrow[leftrightarrow, d]	\\
				{\left( f_F \in \cC_A(B, X^F \otimes Y) \right)} \arrow[leftrightarrow, r] 	&	{\left( f_F \in \cC(A \otimes B, X^F \otimes Y) \right)}
			\end{tikzcd}
		\end{equation*}
		where the lower two are compatible families in the same sense as in Definition~\ref{def:inf_product}.
		The vertical one is natural in both $B$ and $X$ by assumption, while for the horizontal ones, naturality follows from the definition of $\cC_A$.
		
		Since identities in $\cC_A$ correspond to identities in $\cC$ with $A$ discarded, the infinite marginalization maps in $\cC_A$ are likewise given by discarding $A$ and applying the respective $\pi_F \in \cC(X^\N, X^F)$.
		In particular, the infinite marginalization maps in $\cC_A$ are deterministic, which means that the countable Kolmogorov power of $X$ in $\cC_A$ indeed exists and is given by $X^\N$.
	\end{proof}
	
	\begin{proof}[Proof of Theorem~\ref{syntheticthm}]
		We first consider the case $A = I$. Then we get
		\begin{equation}\label{eq:main_proof1}
			\tikzfig{main_proof1}
		\end{equation}
		 as a consequence Lemma~\ref{lem:conditionally_iid}, together with the fact that for each morphism $f$, we have $f=\samp\circ f^\sharp$ (see Section~\ref{sec:Markov_cat}).
		 This already has the desired form of equation~\eqref{eq:de_finetti_claim}.
	
		Now, for a general morphism $p \colon A \to X^\N$, we apply this result to the parametric Markov category $\cC_A$, in which $p$ is represented by a morphism with domain $I$.
		By \cite[Lemmas 2.10 and 3.22]{fritz2020representable} and Lemma~\ref{lem:product_in_parametric}, $\cC_A$ satisfies our three assumptions provided that $\cC$ itself does.
		As is mentioned in \cite[Example 3.17]{fritz2020representable}, the sampling map in $\cC_A$ is represented by $\discard_{A} \otimes \, \samp$ in $\cC$.
		Therefore, instantiating equation \eqref{eq:main_proof1} in $\cC_A$ gives
		\begin{equation}\label{eq:main_proof2}
			\tikzfig{main_proof2}
		\end{equation}
		in $\cC$, which is the relevant form for what we wanted to show.
	\end{proof}

\smalltitle{Acknowledgments.}

	We want to thank the anonymous reviewer for the very helpful suggestions.
	Research for the second author is supported by NSERC Discovery grant RGPIN 2017-04383, and by the Perimeter Institute for Theoretical Physics.
	Research at Perimeter Institute is supported in part by the Government of Canada through the Department of Innovation, Science and Economic Development Canada and by the Province of Ontario through the Ministry of Colleges and Universities.
	Research for the third author is supported by the ERC grant ``BLaST --  Better Language for Statistics'', and by the University of Oxford.
 
\bibliographystyle{amsplain}

\providecommand{\bysame}{\leavevmode\hbox to3em{\hrulefill}\thinspace}

\end{document}